\renewcommand{\leq}{\leqslant}
\renewcommand{\geq}{\geqslant}
\renewcommand{\trianglelefteq}{\trianglelefteqslant}
\newtheorem{theorem}{Theorem}[section]
\newtheorem{convention}[theorem]{Convention}
\newtheorem{corollary}[theorem]{Corollary}
\newtheorem{definition}[theorem]{Definition}
\newtheorem{lemma}[theorem]{Lemma}
\newtheorem{proposition}[theorem]{Proposition}
\newtheorem{hypothesis}[theorem]{Hypothesis}
\newtheorem{fact}[theorem]{Fact}
\newtheorem{remark}[theorem]{Remark}
\newtheorem{notation}[theorem]{Notation}
\newtheorem*{theorem1.2}{Theorem 1.2}
\newtheorem*{theorem1.3}{Theorem 1.3}
\newtheorem*{corollary1.4}{Corollary 1.4}
\newtheorem*{corollary1.5}{Corollary 1.5}
\def\Ind#1#2{#1\setbox0=\hbox{$#1x$}\kern\wd0\hbox to 0pt{\hss$#1\mid$\hss}
\lower.9\ht0\hbox to 0pt{\hss$#1\smile$\hss}\kern\wd0}
\newcommand{\indep}[2]{%
  \mathrel{
    \mathop{
      \vcenter{
        \hbox{%
\oalign{
\noalign{\kern-.3ex}\hfil$\vert$\hfil\cr
              \noalign{\kern-.7ex}
              $\smile$\cr\noalign{\kern-.3ex}
}
}
      }
}^{\!\!\!\!\!#2}_{\!\!\hspace{-0.1em}#1}
  }
}
\newcommand{\displayindep}[2]{%
  \mathrel{
    \mathop{
      \vcenter{
        \hbox{%
\oalign{
\noalign{\kern-.3ex}\hfil$\vert$\hfil\cr
              \noalign{\kern-.7ex}
              $\smile$\cr\noalign{\kern-.3ex}
}
}
      }
}^{\!\!\hspace{-0.1em}#2}_{\!\!\hspace{-0.1em}#1}
  }
}
\newcommand{\displayfindep}[2]{%
  \mathrel{
    \mathop{
      \vcenter{
        \hbox{%
\oalign{
\noalign{\kern-.3ex}\hfil$\vert$\hfil\cr
              \noalign{\kern-.7ex}
              $\smile$\cr\noalign{\kern-.3ex} 
}
}
      }
}^{\!\hspace{-0.14em}#2}_{\!\!\hspace{-0.05em}#1}
  }
}
\newcommand{\rkindep}[1][]{\indep{#1}{\mathrm{rk}}}
\newcommand{\nrkindep}[1][]{\not\indep{#1}{\mathrm{rk}}}
\def\ind{\mathop{\mathpalette\Ind{}}}
\def\notind#1#2{#1\setbox0=\hbox{$#1x$}\kern\wd0
\hbox to 0pt{\mathchardef\nn=12854\hss$#1\nn$\kern1.4\wd0\hss}
\hbox to 0pt{\hss$#1\mid$\hss}\lower.9\ht0 \hbox to 0pt{\hss$#1\smile$\hss}\kern\wd0}
\begin{document}
\begin{abstract} Generalizing the $\omega$-categorical context, we introduce a notion, which we call the Lascar Property, that allows for a fine analysis of the topological isomorphisms between automorphism groups of countable saturated structures satisfying this property. In particular, under these assumptions, we exhibit a Galois correspondence between pointwise stabilizers of finitely generated algebraically closed subsets of $M$ and finitely generated algebraically closed subsets of $M$. We use this to characterize the group of automorphisms of $\mathrm{Aut}(M)$, for $M$ a countable saturated model of $\mathrm{ACF}_0$ or an infinite-dimensional $\mathbb{K}$-vector space with $\mathbb{K}$ countable, generalizing a classical result of Evans $\&$ Lascar (1997), while at the same time subsuming the analysis of Paolini (2024) for $\omega$-categorical structures with weak elimination of imaginaries.
\end{abstract}

\title[A Galois correspondence for automorphism groups]{A Galois correspondence for automorphism groups of structures with the Lascar property}

\thanks{Research of the first-named author was  supported by the project PRIN 2022 ``Models, sets and classifications", prot. 2022TECZJA, and by INdAM Project 2024 (Consolidator grant) ``Groups, Crystals and Classifications''. The authors wish to thank D. Evans for crucial discussions related to this paper and Emil Jeřábek for his answer on StackExchange to a question posed by Federico Pisciotta.}

\author{Gianluca Paolini}
\address{Department of Mathematics ``Giuseppe Peano'', University of Torino, Via Carlo Alberto 10, 10123, Italy.}
\email{gianluca.paolini@unito.it}

\author{Federico Pisciotta}
\address{Department of Mathematics, Computer Science and Physics, University of Udine, Via delle Scienze 206, 33100, Italy.}
\email{pisciotta.federico@spes.uniud.it}

\maketitle


\section{Introduction}

In \cite{Paolini_BLMS}, inspired by a descriptive set theoretic question from \cite{coarse}, we proved that, under the assumption of weak elimination of imaginaries, we have that the open subgroups of a countable $\omega$-categorical structure $M$ admit a combinatorial characterization as ``generalized setwise stabilizers'' of finitely generated Galois algebraically closed subsets of $M$. This led to a partial solution to one of the problems posed in \cite{coarse}, and also, more algebraically, to a concrete description of the group $\mathrm{Aut}_{\mathrm{top}}(\mathrm{Aut}(M))$ of topological automorphisms of $\mathrm{Aut}(M)$, where on $\mathrm{Aut}(M)$ we consider the topology induced by the usual topology of finite information on $\mathrm{Sym}(\omega)$. The point of this paper is to extend the techniques introduced in \cite{Paolini_BLMS} (and later also explored in \cite{coarse_PE}) outside of the context of $\omega$-categorical structures, with connections to classical works of Evans $\&$ Lascar from \cite{Evans_ACF} on outer automorphisms of groups of automorphisms of $\omega$-saturated models of strongly minimal theories.

From now on, all the structures $M$ considered in this paper will be countable and in a countable language, and $G=\mathrm{Aut}(M)$. Also, we will denote by $\mathbf{A}(M)$ the set of algebraically closed sets of $M$ of the form $\mathrm{acl}_M(A)$ for $A$ a finite subset of $M$, where ``algebraic closure'' is the standard model theoretic operator, cf. \ref{algebraicity}. As is well known, for $\omega$-categorical structures $M$, the hypothesis of weak elimination of imaginaries admits many equivalent definitions; the one that will be relevant to our paper, and which was crucial for \cite{Paolini_BLMS}, is the following: for every open subgroup $H$ of $G = \mathrm{Aut}(M)$ there is a (unique) $K \in \mathbf{A}(M)$ such that $G_{(K)} \leqslant H \leqslant G_{\{K\}}$, where $G_{(K)}$ and $G_{\{K\}}$ denote ``pointwise stabilizer'' and ``setwise stabilizer'', respectively. This condition arose naturally also outside of the $\omega$-categorical context, and it appeared elsewhere in the literature, in particular in the works of Poizat \cite[Chapter 16]{poizat} on one side, and Evans and Lascar \cite{Evans_ACF} on the other side. We will see that this condition, together with a saturation assumption, allows for the techniques from \cite{Paolini_BLMS} to be applied outside of the context of $\omega$-categorical structures (while at the same time subsuming the $\omega$-categorical context). This leads to our standing hypothesis:

\begin{hypothesis}\label{hyp_intro}
Let $M$ be a countable saturated structure with the Lascar Property, i.e., for every open $H \leqslant G$, there is $K \in \mathbf{A}(M)$ such that:
$$G_{(K)} \leqslant H \leqslant G_{{K}}.$$
\end{hypothesis}

As we shall see, in the context of (infinite) $\omega$-categorical structures, the assumption of saturation is automatically satisfied, while the Lascar Property corresponds {\em exactly} to weak elimination of imaginaries. This is the context of the aforementioned reference \cite{Paolini_BLMS}. The next theorem shows that the analysis undertaken in \cite{Paolini_BLMS} generalizes smoothly to the general context of countable structures satisfying \ref{hyp_intro}, and so in particular the requirement of local finiteness of the Galois algebraic closure operator (which is satisfied in the $\omega$-categorical context) is not necessary.

 \begin{theorem}\label{main_th} 
    Let $M$ and $N$ be countable saturated structures with the Lascar Property. Then the following holds:
 \begin{enumerate}[(1)]
 \item pointwise stabilizers of finitely generated algebraically closed subsets of $M$ are $G_\delta$-subgroups of $\mathrm{Aut}(M)$, and there is a Galois correspondence between them and the set of finitely generated algebraically closed subsets of $M$;
 \item the structure $\mathcal{E}^{\mathrm{ex}}_M$ associated to $M$ as in Definition \ref{def_expanded_12} is such that: $$\mathrm{Aut}(M) \cong_{\mathrm{top}} \mathrm{Aut}(N) \; \Leftrightarrow \; \mathcal{E}^{\mathrm{ex}}_M \cong \mathcal{E}^{\mathrm{ex}}_N;$$
 and $$\mathrm{Aut}_{\mathrm{top}}(\mathrm{Aut}(M)) \cong \mathrm{Aut}(\mathcal{E}^{\mathrm{ex}}_M).$$
 \end{enumerate}
 \end{theorem}

In many respects, Theorem~\ref{main_th} can be considered the best possible in terms of describing topological isomorphisms. In fact, in many cases of interest it suffices to examine only a single slice, or a finite number of slices, of $\mathcal{E}^{\mathrm{ex}}_M$ (see \ref{main_theorem} for a precise statement), and this already yields a concrete characterization of $\mathrm{Aut}_{\mathrm{top}}(\mathrm{Aut}(M))$. We will see that \ref{main_th} has several explicit applications, but before turning to these, it is natural to ask: when are our assumptions satisfied? We dwell on this question in Section~\ref{section4}, where we prove that the existence of an integer-valued rank function inducing a stationary independence relation which admits weak canonical bases (in the sense of \ref{def_WCB}), together with a Noetherianity condition on $\mathbf{A}(M)$, suffices to guarantee the Lascar Property (cf.~\ref{th_Lascar_baguette}). This subsumes various cases of interest, and in particular several classical structures of finite Morley rank—most notably the theory of infinite-dimensional $\mathbb{K}$-vector spaces (for $\mathbb{K}$ a countable field) and the theory $\mathrm{ACF}_0$ of algebraically closed fields of characteristic~$0$.

We finally arrive at the applications announced for our methods. In \cite{Evans_ACF}, Evans $\&$ Lascar proved that the automorphism group of the countable saturated model of $\mathrm{ACF}_0$ is complete (i.e., the group has a trivial center and every automorphism is inner). We will use our technology to generalize their proof. 

Our main tool in this respect is the following theorem, in which the group $H$ can be viewed as a slight enlargement of $\mathrm{Aut}(M)$, obtained by considering automorphisms of the underlying geometry. For example, when $M$ is a $\mathbb{K}$-vector space, every automorphism of the geometry (that is, of the associated projective space) is induced by a semilinear automorphism. In this case, $H$ is precisely the group of semilinear automorphisms extending $\mathrm{Aut}(M)$, where the elements of $\mathrm{Aut}(M)$ are exactly the linear automorphisms (see \ref{vectspace_corollary}).

\begin{theorem}\label{genevansth} Let $M$ be a countable saturated structure with the Lascar Property and suppose that $(M,\mathrm{acl}_M)$ is a pregeometry.
Suppose further that there is $\mathrm{Aut}(M)\leq H \leq \mathrm{Sym}(\omega)$ such that:
    \begin{enumerate}[(1)]
    \item $\varphi_H:H \rightarrow \mathrm{Aut}(\mathbb{G}_M)$ defined as $\varphi_H(h)(A)=h(A)$ is a well-defined homomorphism;
    \item $\gamma: H \rightarrow \mathrm{Aut}_{\mathrm{top}}(\mathrm{Aut}(M))$ defined as $h \mapsto h(\cdot)h^{-1}$ is well defined;
    \item there exists {a} homomorphism $\hat{\cdot}:\mathrm{Aut}(\mathbb{G}_M) \rightarrow H$ such that $\pi(\gamma(\hat{g}))=g$ where $\pi$ is the canonical map defined in Lemma \ref{canonicalmaplemma}.
    \end{enumerate} 
Then   $$\mathrm{Aut}_{\mathrm{top}}(\mathrm{Aut}(M))\cong \mathrm{ker}(\pi) \rtimes \mathrm{Aut}(\mathbb{G}_M)$$
and the following diagram commutes:
    $$\begin{tikzcd}
    H \arrow[r, "\varphi_H"] \arrow[d, "\gamma"]
    & \mathrm{Aut}(\mathbb{G}_M) \\
    \mathrm{Aut}_{\mathrm{top}}(\mathrm{Aut}(M)). \arrow{ur}{\pi} 
    &
    \end{tikzcd}$$
    \end{theorem}

This allows us to readily infer the aforementioned results of Evans $\&$ Lascar. Notice that it is known that in the case of $\mathrm{ACF}_0$ we also have the so-called small index property (i.e., every subgroup of countable index is open), and so in this case $\mathrm{Aut}_{\mathrm{top}}(\mathrm{Aut}(M)) = \mathrm{Aut}(\mathrm{Aut}(M))$, i.e., every automorphism of $\mathrm{Aut}(M)$ is automatically topological (this is why we can drop the ``$\mathrm{top}$'' below).

\begin{corollary}\label{acf_corollary} Let $M\models \mathrm{ACF}_0$ be countable and saturated, then we have:
$$\mathrm{Aut}(\mathrm{Aut}(M))\cong \mathrm{Aut}(\mathbb{G}_M),$$

and automorphisms of $\mathrm{Aut}(M)$ are \mbox{inner (so $\mathrm{Aut}(M)$, being centerless, is complete).}
\end{corollary}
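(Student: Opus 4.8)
The plan is to deduce Corollary~\ref{acf_corollary} from Theorem~\ref{genevansth}, applied to a countable saturated $M \models \mrm{ACF}_0$ with the choice $H = \mrm{Aut}(M)$; note that, since $M$ is a pure field, $H$ is simply the group of field automorphisms of $M$.

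First I would verify that $M$ has the Lascar Property, via the criterion of Section~\ref{section4}: Morley rank ($=$ transcendence degree) is an integer-valued rank function inducing the stationary independence relation of algebraic independence; this relation admits weak canonical bases in the sense of~\ref{def_WCB}, the canonical base of the type of a tuple over a field being the (finitely generated) field of definition of its locus; and the relevant Noetherianity condition on $\mathbf{A}(M)$ holds because every member of $\mathbf{A}(M)$ is the algebraic closure of a finitely generated subfield of $M$, hence of finite transcendence degree, which bounds the lengths of the chains in play. Thus \ref{prop_TPP} and \ref{th_Lascar_baguette} apply. Moreover $\mrm{acl}^{\mrm{g}}_M$ on $M \models \mrm{ACF}_0$ is field-theoretic algebraic closure, which satisfies exchange (Steinitz), so $\mathbb{G}_M = (M, \mrm{acl}^{\mrm{g}}_M)$ is a pregeometry and Theorem~\ref{genevansth} provides the canonical map $\pi : \mrm{Aut}_{\mrm{top}}(\mrm{Aut}(M)) \to \mrm{Aut}(\mathbb{G}_M)$.

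Next I would check conditions (1)--(4) of Theorem~\ref{genevansth} for $H = \mrm{Aut}(M)$ with the lift $\hat g := g$ (legitimate once one knows that the pregeometry automorphisms relevant here are field automorphisms). A field automorphism preserves $\mathbf{A}(M)$ and the induced action $A \mapsto h(A)$ is precisely the homomorphism $\phi_H$; that $\phi_H$ surjects onto $\mrm{Aut}(\mathbb{G}_M)$ is the point where the reconstruction phenomenon underlying Evans \& Lascar enters. Condition (2) holds because conjugation by a field automorphism is an \emph{inner} automorphism of $\mrm{Aut}(M)$, a fortiori topological; conditions (3) and (4) are immediate for $\hat g := g$. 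Theorem~\ref{genevansth} then yields $\mrm{Aut}_{\mrm{top}}(\mrm{Aut}(M)) \cong \ker(\pi) \rtimes \mrm{Aut}(\mathbb{G}_M)$, the factor $\mrm{Aut}(\mathbb{G}_M)$ being realized by conjugations by elements of $H = \mrm{Aut}(M)$, i.e. by inner automorphisms. It then remains to see $\ker(\pi) = 1$: an $\alpha \in \ker(\pi)$ induces the identity on $\mathbb{G}_M$, hence by the definable Galois correspondence of Theorem~\ref{main_th}(1) fixes each $G_{(A)}$ ($A \in \mathbf{A}(M)$) setwise; since for $\mrm{ACF}_0$ the partial-isomorphism data recorded in $\mathcal{E}^{\mrm{ex}}_M$ is rigid relative to the underlying pregeometry (elimination of imaginaries, definable closure generated by the field operations), this forces $\alpha = \mrm{id}$. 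Consequently $\mrm{Aut}_{\mrm{top}}(\mrm{Aut}(M)) \cong \mrm{Aut}(\mathbb{G}_M)$ and every topological automorphism of $\mrm{Aut}(M)$ is inner.

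Finally, the small index property for $\mrm{ACF}_0$ (which is known) gives $\mrm{Aut}(\mrm{Aut}(M)) = \mrm{Aut}_{\mrm{top}}(\mrm{Aut}(M))$, so every automorphism of $\mrm{Aut}(M)$ is inner; and $\mrm{Aut}(M)$ is centerless for the saturated $M$ (standard), so it is complete. The main obstacle is the combination of the two facts that $\ker(\pi)$ is trivial and that $\phi_{\mrm{Aut}(M)}$ surjects onto $\mrm{Aut}(\mathbb{G}_M)$ --- equivalently, that a topological automorphism of $\mrm{Aut}(M)$ is determined by, and induces a field automorphism on, the pregeometry --- which is exactly where the reconstruction content of Evans \& Lascar is repackaged in this framework; by contrast, the verification of the Lascar Property through Section~\ref{section4} and the appeals to the small index property and to centrelessness are routine.
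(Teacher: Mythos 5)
Your proposal is correct and follows essentially the same route as the paper: verify the Lascar Property via Section~\ref{section4}, apply Theorem~\ref{genevansth} with $H = \mrm{Aut}(M)$ and $\hat{g} = \phi^{-1}(g)$, and conclude via the small index property and centerlessness (this is the paper's Lemma~\ref{evcompl} specialized to $\mrm{ACF}_0$). The only notable difference is that where you justify $\ker(\pi)=1$ by an informal rigidity argument, the paper reduces it cleanly to the injectivity of the natural map $\phi:\mrm{Aut}(M)\to\mrm{Aut}(\mathbb{G}_M)$ (Lemma~\ref{phiinjective}) and cites Evans--Hrushovski for both the injectivity and the surjectivity of $\phi$ --- precisely the two external inputs you correctly single out as the main obstacles.
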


Finally, we use \ref{genevansth} to give a description of $\mathrm{Aut}(\mathrm{Aut}(M))$ in the context of vector spaces; this last result generalizes the analogous result from \cite{Paolini_BLMS} in the $\omega$-categorical context. To the best of our knowledge, this last application is new, but proofs using different methods might have appeared elsewhere in the literature. Notice that, in this case as well, the small index property holds (so we can drop the ``$\mathrm{top}$'').

\begin{corollary}\label{vectspace_corollary}
Let $V$ be a vector space of dimension $\aleph_0$ over a countable field $\mathbb{K}$. Then:
$$\mathrm{Aut}(\mathrm{Aut}(V)) \cong \mathrm{Aut}(\mathbb{K}^\times) \rtimes \mathrm{Aut}(\mathbb{G}_V).$$
\end{corollary}

Finally, we observe that the scope of applicability of our methods is wider than one might expect at first glance, if one is willing to pass from $M$ to $M^{\mathrm{eq}}$. By this, we mean that although $M$ may fail \ref{hyp_intro}, adding additional sorts from $M^{\mathrm{eq}}$, we might very well make \ref{hyp_intro} true. This is most easily seen in the case of $\omega$-categorical structures (where condition \ref{hyp_intro} corresponds {\em exactly} to weak elimination of imaginaries), but this remark is \mbox{meaningful also outside of the} $\omega$-categorical context.

Concerning the structure of our paper. In Section~\ref{char_stab} we characterize pointwise stabilizers. In Section~\ref{sec_reconstruction} we prove Theorem~\ref{main_th}. In Section~\ref{section4} we explore when the conditions from \ref{hyp_intro} are satisfied. Finally, in Section~\ref{sec_applications} we focus on applications.

\section{Characterizing pointwise stabilizers}\label{char_stab}

This section provides a topological–algebraic characterization of the pointwise stabilizers of finitely generated algebraically closed subsets of a countable saturated structure satisfying the Lascar Property. This characterization is fundamental for the proof of our Main Theorem (i.e., Theorem~\ref{main_th}). Indeed, once the pointwise stabilizers of finitely generated algebraically closed subsets have been identified as a set of topological subgroups, the machinery developed in the proof of Theorem \ref{main_theorem} can be applied.
The idea of analyzing the lattice of finitely generated algebraically closed sets originates in \cite{Paolini_Shelah}, in the $\omega$-categorical context, and was further developed in \cite{Paolini_BLMS}. Several definitions and proofs in this section are adapted from the latter; we include them to keep the present paper self-contained.

\begin{definition}\label{algebraicity} Let $M$ be a structure.
 \begin{enumerate}[(1)]
 \item $a \in M$ is algebraic  over $A \subseteq M$ if $M\models \varphi(a)\land \exists^{=k} x \varphi(x)$ holds for a formula $\varphi(x) \in L(A)$ (the language of $A$) and some positive integer $k$; 
 \item the algebraic closure of $A \subseteq M$, denoted by $\mathrm{acl}_M(A)$, is the set of elements of $M$ which are algebraic over $A$.
\end{enumerate}
\end{definition}
\begin{notation} \label{algclosednotation}Let $M$ be a structure and $G = \mathrm{Aut}(M)$.
 \begin{enumerate}[(1)]
 \item Given a structure $M$ and $A \subseteq M$, and considering $G$ in its natural action on $M$, we denote the pointwise (resp. setwise) stabilizer of $A$ under this action by $G_{(A)}$ (resp. $G_{\{ A \}}$); 
 \item we let $\mathbf{A}(M) = \{ \mathrm{acl}_M(A) : A \subseteq M, |A|<\omega \}$;
 \item we let $\mathcal{PS}(M) = \{ G_{(K)} : K \in \mathbf{A}(M) \} $;
 \item for $K \in \mathbf{A}(M)$ we denote by $\mathrm{Aut}_M(K)$ the set of automorphisms of $K$ which extend to an automorphism of $M$;
 \item we let $\mathbf{EA}(M) = \{ (K, L) : K \in \mathbf{A}(M) \text{ and } L \leq \mathrm{Aut}_M(K) \}$.
    
\end{enumerate}
\end{notation}

\begin{definition}
    We say that $M$ has the \emph{Lascar Property} if for every open $H \leq G$, there is $K \in \mathbf{A}(M)$ such that:
$$G_{(K)} \leq H \leq G_{\{K\}}.$$
Such $K$ is unique by Proposition \ref{unique_K} and we call it the \emph{support} of $H$.
\end{definition}

\begin{hypothesis}\label{hyp} From now on, throughout this section, let $M$ be a countable saturated structure with the Lascar Property.
\end{hypothesis} 

\begin{fact}
    For an $\omega$-saturated structure $M$, the algebraic closure of a finite $A\subseteq M$  coincides with its Galois-algebraic closure, i.e., 
    $a \in M$ belongs to $\mathrm{acl}_M(A)$ if and only if the orbit of $a$ under $G_{(A)}$, denoted as $\mathcal{O}(a/A)$, is finite (see \cite[10.5]{hodges}).
\end{fact}

\begin{proposition}\label{3petals}
    For every $K,S \in \mathbf{A}(M)$, if $S \nsubseteq K$, then there are $g \in G_{(K)}$ and $h \in G_{(S)}$ such that $g^{-1}hg \notin G_{\{S\}}$.
\end{proposition}

\begin{proof} Let $A,B \subseteq M$ be finite and such that $K=\mathrm{acl}_M(A)$ and $S=\mathrm{acl}_M(B)$.
     Let $s \in S \setminus K$.
     We claim that there exists $g \in G_{(K)}$ such that $g(s) \notin S$.
     
     \noindent By saturation, for any finite $K'\subseteq K$ there exists $f \in G_{(K')}$ such that $f(s)\notin S$. Using the compactness theorem, there exists an elementary extension $M \preccurlyeq M'$ and an $f'\in \mathrm{Aut}(M')_{(K)}$ such that $f'(s)\notin S$.
    Note that $K=\mathrm{acl}_M(A)$ is the same in $M$ and in $M'$, and likewise for $S$.
    We may assume $M'$ to be countable and saturated using the usual countable elementary chain argument and the smallness of $\mathrm{Th}(M)$.
    By the uniqueness of saturated models, $M$ is isomorphic to $M'$. More precisely, since $(M,A,B,s)\equiv (M',A,B,s)$, there is an isomorphism $\sigma: M\cong M'$ such that $\sigma(A,B,s)=(A,B,s)$. Then $g=\sigma^{-1} f' \sigma$ is an automorphism of $M$ that fixes $\mathrm{acl}_M(\sigma^{-1}(A))=K$ and moves $\sigma(s)=s$ out of $\mathrm{acl}(\sigma^{-1}(B))=S$.

    \noindent Using the same argument, there exists $h \in G_{(S)}$ such that $h(g(s)) \notin g(S)$.
    Therefore $g^{-1}hg(s) \notin S$, i.e.,  $g^{-1}hg \notin G_{\{S\}}$.
\end{proof}

\begin{remark}
For the rest of the paper, we could use Proposition~\ref{3petals} with Galois-algebraicity instead of saturation and model theoretic algebraicity. However, since the latter is standard in the literature, we follow that method.
\end{remark}

\begin{proposition}\label{unique_K}
    The support of $H$ is unique.
\end{proposition}
\begin{proof}
    Let $K_1,K_2\in \mathbf{A}(M)$ such that for $i=1,2$ we have $$G_{(K_i)} \leq H \leq G_{\{K_i\}}.$$
    We prove that $K_1 \subseteq K_2$; the converse follows analogously. By the inequalities, we have $G_{(K_1)} \leq H \leq G_{\{K_2\}}.$
    Suppose $K_2 \nsubseteq K_1$ then by Proposition \ref{3petals} there are $g \in G_{(K_1)}$ and $h \in G_{(K_2)}$ such that $g^{-1}hg \notin G_{\{K_2\}}$, a contradiction since $h \in G_{(K_2)} \subseteq G_{\{K_2\}}$ and $g \in G_{(K_1)} \subseteq G_{\{K_2\}}$ by the inequalities.
\end{proof}

\begin{definition}\label{GS_def} The \emph{generalized pointwise stabilizer} of $(K, L) \in \mathbf{EA}(M)$ is defined as follows:
 $$ G_{(K, L)} = \{ f \in \mathrm{Aut}(M) : f \restriction K \in L \}.$$
We denote by $\mathcal{GS}(M)$ the set of all generalized pointwise stabilizers.
\end{definition}

Notice that if $L = \{ \mathrm{id}_K \}$, then  $G_{(K, L)} = G_{(K)}$, i.e., it equals the pointwise stabilizer of $K$, and that if $L = \mathrm{Aut}_M(K)$, then $G_{(K, L)} = G_{\{ K \}}$ ({cf.} \ref{algclosednotation}(4)), i.e., it equals the setwise stabilizer of $K$.
That is why we referred to $ G_{(K, L)}$ as a generalized pointwise stabilizer.

\begin{lemma}\label{baguettelemma} If $H \leq G$ is such that $G_{(K)} \leq H \leq G_{\{K\}}$ for a certain $K \in \mathbf{A}(M)$, then $H \in \mathcal{GS}(M)$. In particular, if $H \leq G$ open, then $H \in \mathcal{GS}(M)$.  
\end{lemma}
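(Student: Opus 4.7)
The plan is to exhibit the subgroup $L \leq \mrm{Aut}_M(K)$ witnessing $H \in \mathcal{GS}(M)$ explicitly, as the collection of all restrictions to $K$ of elements of $H$, and then verify both the definitional membership and the equality $H = G_{(K,L)}$ via a standard coset argument. The two hypotheses of the sandwich play complementary roles: $H \leq G_{\{K\}}$ gives that restriction to $K$ makes sense and lands in $\mrm{Aut}_M(K)$, whereas $G_{(K)} \leq H$ supplies the slack needed to lift elements back into $H$.

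\textbf{Step 1: Define $L$ and check $L \leq \mrm{Aut}_M(K)$.} Put
\[
L = \{\, f\!\restriction\! K : f \in H \,\}.
\]
Since $H \leq G_{\{K\}}$, every $f \in H$ satisfies $f(K)=K$, so $f\!\restriction\! K$ is a bijection of $K$ that extends to an automorphism of $M$ (namely $f$ itself), and therefore $f\!\restriction\! K \in \mrm{Aut}_M(K)$. Closure of $L$ under composition and inverses is immediate from the group structure of $H$: $(f\!\restriction\! K)(g\!\restriction\! K) = (fg)\!\restriction\! K$ and $(f\!\restriction\! K)^{-1} = f^{-1}\!\restriction\! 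K$. Thus $(K,L) \in \mathbf{EA}(M)$ and $G_{(K,L)}$ is defined.

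\textbf{Step 2: Show $H \subseteq G_{(K,L)}$.} This is immediate from the construction: for $f \in H$ we have $f(K)=K$ (again using $H \leq G_{\{K\}}$) and $f\!\restriction\! K \in L$ by definition of $L$, so $f \in G_{(K,L)}$.

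\textbf{Step 3: Show $G_{(K,L)} \subseteq H$.} This is the only step where $G_{(K)} \leq H$ enters. Take any $f \in G_{(K,L)}$, so $f(K)=K$ and $f\!\restriction\! K \in L$. By definition of $L$, pick $g \in H$ with $g\!\restriction\! K = f\!\restriction\! K$. Then $g^{-1}f$ fixes $K$ pointwise, hence $g^{-1}f \in G_{(K)} \leq H$, and so $f = g \cdot (g^{-1}f) \in H$.

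The two inclusions give $H = G_{(K,L)} \in \mathcal{GS}(M)$, as required. No appeal to Hypothesis~\ref{hyp}(\ref{hyp_1}) or to Proposition~\ref{unique_K} is needed; the argument is purely group-theoretic once the sandwich is given, and the only substantive point is recognizing that the natural candidate $L = \{f\!\restriction\! K : f \in H\}$ automatically lies in $\mrm{Aut}_M(K)$ thanks to the upper bound $G_{\{K\}}$, and that the coset reconstruction works thanks to the lower bound $G_{(K)}$.
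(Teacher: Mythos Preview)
Your proof is correct and follows essentially the same approach as the paper: both define $L = \{ f\!\restriction\! K : f \in H\}$ and use that two elements of $G_{\{K\}}$ with the same restriction to $K$ differ by an element of $G_{(K)}$. The only cosmetic difference is that the paper packages your Step~3 as an appeal to the fourth isomorphism theorem via the identification $G_{\{K\}}/G_{(K)} \cong \mrm{Aut}_M(K)$, whereas you verify the inclusion $G_{(K,L)} \subseteq H$ directly by the coset argument; the underlying content is identical.
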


\begin{proof} Since every automorphism in $\mathrm{Aut}_M(K)$ extends to an automorphism of $M$ ({cf.} \ref{algclosednotation}(4)) we have the following isomorphism:
\begin{align*}\label{equation_label}
    f:G_{\{K\}}/G_{(K)} &\cong \mathrm{Aut}_M(K) \\
    gG_{(K)} &\mapsto g\restriction K
\end{align*}
By the fourth isomorphism theorem, $H=G_{(K,L)}$, for $L=\{f\restriction K : f \in H\}$.
\end{proof}

 \begin{proposition}\label{normality}
Let $H_1, H_2 \in \mathcal{GS}(M)$. If $H_1 \trianglelefteq H_2$, then there are $K \in \mathbf{A}(M)$ and $L_1 \leq L_2$ such that $H_i = G_{(K, L_i)}$ for $i =1,2$.
\end{proposition}

\begin{proof}
Firstly, $K_2 \subseteq K_1$.
Suppose not, by Proposition \ref{3petals} there are $g \in G_{(K_1)}$ and $h \in G_{(K_2)}$ such that $g^{-1}hg \notin G_{\{K_1\}}$. The automorphism $g \in G_{(K_1)}\subseteq H_1$ but $g \notin G_{\{K_2\}}$, and so $g \in H_1\setminus H_2$, a contradiction.

\noindent Secondly, $K_1 \subseteq K_2$. Suppose not, by Proposition \ref{3petals} there are $g \in G_{(K_2)}$ and $h \in G_{(K_1)}$ such that $g^{-1}hg \notin G_{\{K_2\}}$. Since $h \in G_{(K_1)} \subseteq H_1$ and $g \in G_{(K_2)} \subseteq H_2$, $H_1 \not\trianglelefteq H_2$, a contradiction.

\noindent Finally, $L_1 \leq L_2$. Suppose not, and let $h \in L_1 \setminus L_2$. Then, recalling Notation \ref{algclosednotation}(4), $h$ extends to an automorphism $\hat{h}$ of $M$. Clearly $\hat{h} \in H_1 \setminus H_2$, a contradiction.
\end{proof}

In general, $\mathcal{GS}(M)$ lies between the open and the $G_\delta$ subgroups. We need the following subset to characterize the pointwise stabilizers.

\begin{definition}
    The \emph{open limits} of $G$ are defined as follows:
    $$\mathcal{OL}(M)=\{H\leq G: H=\bigcap_{i <\omega}H_i \text{ with } H_{i+1}\trianglelefteq H_i \text{ and } H_i \text{ are open }\forall i < \omega \}$$
\end{definition}

\begin{proposition}\label{OLinGS} $\mathcal{OL}(M) \subseteq \mathcal{GS}(M)$. 
\end{proposition}

\begin{proof}
    Let $H \in \mathcal{OL}$. Then $H=\bigcap_{i <\omega}H_i$ with $H_i$ open subgroups for every $ i \in \omega$. By Lemma \ref{baguettelemma} for every $i \in \omega $ there exists $(K_i,L_i)\in \mathbf{EA}(M)$  such that $H_i=G_{(K_i,L_i)}$. Since $H_{i+1} \trianglelefteq H_i$, by Proposition \ref{normality}, for every $i \in \omega$ $K_i=K$  for some $K \in \mathbf{A}(M)$. We claim that $H=G_{(K,L)}$ with $L=\{g \in {\mathrm{Aut}_M(K)}: g \in L_i \text{ for every } i < \omega \}$. Let $f \in H$. Since $f \in G_{(K,L_i)}$, $f\restriction K \in L_i$ and that is $f \in G_{(K,L)}$. For the other containment, suppose not and let $f \in G_{(K,L)} \setminus H$. Then $f \notin G_{(K,L_i)}$ for some $i <\omega$. So  $f\restriction K \notin L_i$, that is $f \notin L$, a contradiction. 
\end{proof}

\begin{proposition}\label{PSinLO} $\mathcal{PS}(M)\subseteq \mathcal{OL}(M)$. 
\end{proposition}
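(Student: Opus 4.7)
The plan is to realise $G_{(K)}$ as the intersection of a descending $\omega$-chain of open subgroups in which consecutive terms stand in a normal-subgroup relation, thereby placing it in $\mathcal{OL}(M)$. Fix $A_0 \subseteq_\omega M$ with $K = \mrm{acl}^{\mrm{g}}_M(A_0)$ and, using countability of $M$, enumerate $K = \{k_n : n < \omega\}$. I will construct inductively finite subsets $\hat B_n \subseteq K$ together with the open subgroups $H_n := G_{(\hat B_n)}$, by setting $\hat B_0 := A_0$ and
\[
\hat B_{n+1} := \hat B_n \cup \mathcal{O}_{H_n}(k_n),
\]
that is, at each step enlarging $\hat B_n$ by the $H_n$-orbit of the next element of the enumeration.

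First I would check well-definedness. Since $K = \mrm{acl}^{\mrm{g}}_M(A_0)$, every element of $K$ has a finite orbit under $H_0 = G_{(A_0)}$, and $H_0$ preserves $K$ setwise because $g(\mrm{acl}^{\mrm{g}}_M(A_0)) = \mrm{acl}^{\mrm{g}}_M(g(A_0)) = K$ for every $g \in H_0$. As $H_n \leq H_0$, the orbit $\mathcal{O}_{H_n}(k_n) \subseteq \mathcal{O}_{H_0}(k_n) \subseteq K$ is finite; hence $\hat B_n$ remains a finite subset of $K$ and $H_n$ is open.

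The main step is to verify $H_{n+1} \trianglelefteq H_n$. Take $g \in H_n$ and $h \in H_{n+1}$. Since $g$ fixes $\hat B_n$ pointwise and permutes the $H_n$-orbit $\mathcal{O}_{H_n}(k_n)$, it permutes $\hat B_{n+1}$; in particular, for every $k \in \hat B_{n+1}$ we have $g^{-1}(k) \in \hat B_{n+1}$, whence $h(g^{-1}(k)) = g^{-1}(k)$ and so $ghg^{-1}(k) = k$. Therefore $ghg^{-1} \in G_{(\hat B_{n+1})} = H_{n+1}$, as required.

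Finally, since $k_n \in \hat B_{n+1}$ for every $n$, we obtain $\bigcup_n \hat B_n = K$ and hence $\bigcap_n H_n = G_{(\bigcup_n \hat B_n)} = G_{(K)}$, exhibiting $G_{(K)}$ as an element of $\mathcal{OL}(M)$. The only mildly delicate point is the combined finiteness and invariance claim used to set up the recursion, which is really just the definition of Galois algebraic closure applied to $K = \mrm{acl}^{\mrm{g}}_M(A_0)$ with $A_0$ finite; the Lascar Property itself plays no role in this inclusion.
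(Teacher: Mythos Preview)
Your proof is correct and follows essentially the same approach as the paper: build an increasing chain of finite subsets of $K$ by successively adjoining orbits of the enumerated elements, so that the corresponding pointwise stabilizers form a descending normal chain of open subgroups with intersection $G_{(K)}$. The only cosmetic difference is that the paper adjoins the orbit $\mathcal{O}(b_i/A)$ under the fixed group $G_{(A)}$ at every stage, whereas you adjoin the (smaller) orbit under the current $H_n$; both variants yield the required setwise invariance and hence normality.
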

\begin{proof}
    Let $K \in \mathbf{A}(M)$, i.e., there exists $A \subseteq M$ finite such that $K=\mathrm{acl}_M(A)$. We want to define an increasing chain $\{A_i\}_{i \in I}$ such that the respective stabilizers form a decreasing chain of open subgroups. Let $\{b_i\}_{i \in I}$ a countable enumeration of $K$. We set $A_0=A$. Since $b_0 \in K$, the orbit $\mathcal{O}(b_0/A)$ is finite. Let $A_1=A_0\cup \mathcal{O}(b_0/A)$. Therefore $G_{(A_1)}$ {is an open subgroup} and $G_{(A_1)} \trianglelefteq G_{(A_0)}$. In the same way, we can define $A_{i+1}=A_i \cup \mathcal{O}(b_i/A)$ and we have $G_{(A_{i+1})}$ is an open subgroup and $G_{(A_{i+1})}\trianglelefteq G_{(A_i)}$. Clearly, $G_{(K)}=\bigcap_{i \in I}  G_{(A_i)}$, i.e.,  $G_{(K)} \in \mathcal{OL}(M)$.
\end{proof}

The next proposition is the core of this section and provides a characterization of the pointwise stabilizers.

 \begin{proposition}\label{char_point_stab}  
 $\mathcal{PS}(M) = \{ H \in \mathcal{OL}(M) : \not\!\exists H' \in \mathcal{OL}(M) \text{ such that } H' \lhd H  \}.$
\end{proposition}

 \begin{proof} Let $H \in \mathcal{PS}(M)$ and assume that there exists $H' \in \mathcal{OL}(M)$ such that $H' \lhd H$. By Proposition \ref{OLinGS}, $H' = G_{(K', L')}$ for $(K', L') \in \mathbf{EA}(M)$. 
 By Proposition \ref{normality},  $K' = K$ and $L' \leq \{ \mathrm{id}_K \}$. So $H' = H$, a contradiction.
 
\noindent Conversely, let $H$ as above, then $H = G_{(K, L)}$ for $(K, L) \in \mathbf{EA}(M)$ by {Proposition}~\ref{OLinGS}. If $L \neq \{ \mathrm{id}_K \}$ then letting $H' = G_{(K)}$ we have $H' \lhd H$, a contradiction since $H' \in \mathcal{OL}(M)$ by Proposition~\ref{PSinLO}.
\end{proof}

Once we have characterized the pointwise stabilizers, we aim to refine this description by obtaining a layer-by-layer characterization. We first define a layering on the lattice of finitely generated algebraically closed sets.
\begin{notation}\label{the_lattice notation} Let $\mathbf{A}_+(M)$ be the set $\mathbf{A}(M)$ extended with an extra element $\mathrm{dcl}^{\mathrm{g}}_M(\emptyset)$ (of course if $\mathrm{dcl}^{\mathrm{g}}_M(\emptyset) = \mathrm{acl}_M(\emptyset)$, then $\mathbf{A}_+(M) =\mathbf{A}(M)$). On $\mathbf{A}_+(M)$ we consider a poset structure by letting $0 = \mathrm{dcl}^{\mathrm{g}}_M(\emptyset) \leq \mathrm{acl}_M(\emptyset)$ be at the bottom of the lattice and by defining (as usual) $K_1 \leq K_2$ if and only if $K_1 \subseteq K_2$. For every $0 < k < \omega$, we define $\mathbf{A}_k(M)$ as the set of $K \in \mathbf{A}(M)$ such that $\mathrm{dcl}^{\mathrm{g}}_M(\emptyset) \neq K$ and if there are $\mathrm{dcl}^{\mathrm{g}}_M(\emptyset) \lneq K_1 \lneq K_2 \lneq \cdots \lneq K_m = K$, then $m \leq k$. Also, by convention, we let $\mathbf{A}_\omega(M) = \mathbf{A}(M)$.
\end{notation}
 Once we have characterized $\mathcal{PS}(M)$, it is immediate to characterize those elements of $\mathcal{PS}(M)$ which correspond to $\mathbf{A}_k(M)$, recalling Notation~\ref{the_lattice notation} and {Proposition} \ref{char_point_stab}.

 \begin{proposition}\label{char_point_stab_singletons} For $0 < k < \omega$, let
 $\mathcal{PS}_k(M)$ be the set of  $H \in \mathcal{PS}(M)$ such that $H \neq G$ and any chain of elements of $\mathcal{PS}(M)$ starting at $H$ and arriving at $G$ has length $\leq k$. Also, by convention, we let $\mathcal{PS}_\omega(M) = \mathcal{PS}(M)$. Then, recalling the notation introduced in \ref{the_lattice notation} (so $\mathbf{A}_\omega(M) = \mathbf{A}(M)$), we have the following:
 $$\mathcal{PS}_k(M) = \{G_{(K)} : K \in \mathbf{A}_k(M) \}.$$
\end{proposition}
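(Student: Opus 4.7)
The plan is to transfer the chain condition defining $\mathbf{A}_k(M)$ across an order-reversing bijection between $\mathbf{A}_+(M)$ and $\mathcal{PS}(M)\cup\{G\}$. First, recall from Proposition~\ref{char_point_stab} that $\mathcal{PS}(M) = \{G_{(K)} : K \in \mathbf{A}(M)\}$. Applying Proposition~\ref{pre_normality_prop} to the generalized pointwise stabilizers $G_{(K_i)} = G_{(K_i,\{\mrm{id}_{K_i}\})}$, one reads off
$$G_{(K_2)} \leq G_{(K_1)} \; \Longleftrightarrow \; K_1 \subseteq K_2,$$
and in particular the map $K \mapsto G_{(K)}$ is an order-reversing bijection from $\mathbf{A}(M)$ onto $\mathcal{PS}(M)$.

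Next, since every automorphism of $M$ fixes $\mrm{dcl}^{\mrm{g}}_M(\emptyset)$ pointwise, we have $G = G_{(\mrm{dcl}^{\mrm{g}}_M(\emptyset))}$; conversely, if $G_{(K)} = G$ for some $K \in \mathbf{A}(M)$ then every element of $K$ has singleton orbit under $G$, forcing $K \subseteq \mrm{dcl}^{\mrm{g}}_M(\emptyset)$. Thus extending the bijection by sending the formal bottom element $0 = \mrm{dcl}^{\mrm{g}}_M(\emptyset)$ of $\mathbf{A}_+(M)$ to $G$ yields an order-reversing bijection
$$\Phi : \mathbf{A}_+(M) \longrightarrow \mathcal{PS}(M) \cup \{G\}, \quad K \mapsto G_{(K)},$$
which restricts to an order-reversing bijection between $\mathbf{A}(M)\setminus\{\mrm{dcl}^{\mrm{g}}_M(\emptyset)\}$ and $\mathcal{PS}(M)\setminus\{G\}$.

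Now fix $0 < k \leq \omega$ and $H \in \mathcal{PS}(M)$ with $H \neq G$; write $H = G_{(K)}$ for the unique $K \in \mathbf{A}(M)$ with $K \neq \mrm{dcl}^{\mrm{g}}_M(\emptyset)$. Under $\Phi^{-1}$, a chain $H = H_0 \subsetneq H_1 \subsetneq \cdots \subsetneq H_m = G$ of elements of $\mathcal{PS}(M)\cup\{G\}$ corresponds precisely to a chain $K = K_0 \supsetneq K_1 \supsetneq \cdots \supsetneq K_m = \mrm{dcl}^{\mrm{g}}_M(\emptyset)$ in $\mathbf{A}_+(M)$, with matching length. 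Reading this equivalence both ways gives $H \in \mathcal{PS}_k(M) \Leftrightarrow K \in \mathbf{A}_k(M)$, which is the desired equality; the $k = \omega$ case is just Proposition~\ref{char_point_stab}. The only delicate point is the behaviour of the bottom element, i.e., distinguishing $G$ from $G_{(\mrm{acl}^{\mrm{g}}_M(\emptyset))}$ when $\mrm{dcl}^{\mrm{g}}_M(\emptyset) \neq \mrm{acl}^{\mrm{g}}_M(\emptyset)$, and this is precisely what Notation~\ref{the_lattice notation} is set up to handle.
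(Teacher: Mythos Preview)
Your proof is correct and follows exactly the approach implicit in the paper's one-line argument (``This is obvious, recalling \ref{char_point_stab}''): you make explicit the order-reversing bijection $K \mapsto G_{(K)}$ between $\mathbf{A}_+(M)$ and $\mathcal{PS}(M) \cup \{G\}$, verify it via Proposition~\ref{pre_normality_prop}, and transport the chain condition across it. The care you take with the bottom element $\mrm{dcl}^{\mrm{g}}_M(\emptyset)$ versus $\mrm{acl}^{\mrm{g}}_M(\emptyset)$ is exactly the point of Notation~\ref{the_lattice notation}, and your handling of it is correct.
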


 \section{Reconstructing isomorphisms}\label{sec_reconstruction}

In this section, we show that, with the necessary adjustments, the reconstruction theorem from \cite{Paolini_BLMS} remains valid in our context. This, in turn, allows us to apply it in Section \ref{sec_applications} to structures that are not $\omega$-categorical.

\begin{notation}\label{kM_notation} We define $k_M$ as the following cardinal:
 $$k_M = \mathrm{sup} \{k < \omega : \exists a_1, \dots, a_k \in M \text{ s.t. } \mathrm{acl}_M(a_1) \subsetneq \dots \subsetneq \mathrm{acl}_M(a_k)\}.$$
\end{notation}

We now introduce a first-order structure which we refer to as the {\em expanded structure of $M$ of depth $\leq k$} (for $k \leq \omega$). Morally, it is the orbital structure on the set of algebraically closed sets of $M$ which are at distance $\leq k$ from the bottom of the poset $(\mathbf{A}(M), \subseteq)$.

\begin{definition}\label{def_expanded_1}  Let $0 < k \leq \omega$ and recall the notation from \ref{the_lattice notation}.
\begin{enumerate}[(1)]
 \item We let $M^{\mathrm{ex}}_k$ be $\{(K, p, K') : K, K' \in \mathbf{A}_k(M) \text{ and } p: K \cong K'$  such that {there }exists $ f \in G, f\restriction K=p \}.$  
 \item\label{the_identification} We identify $\mathbf{A}_k(M)$ with the set of triples $(K, \mathrm{id}_K, K)$.
\end{enumerate}
\end{definition}

\begin{definition}\label{def_expanded_12} Let $0 < k \leq \omega$. We define a first-\mbox{order structure $\mathcal{E}^{\mathrm{ex}}_M(k)$ as follows:}
 \begin{enumerate}[(1)]
 \item $\mathcal{E}^{\mathrm{ex}}_M(k)$ has as domain $M^{\mathrm{ex}}_k$;
 \item\label{def_expanded_12_unary} we define a unary predicate which holds of $\mathbf{A}_k(M)$, recalling \ref{def_expanded_1}(\ref{the_identification}); 
 \item\label{def_expanded_12_binary} for $1 < n< \omega$, we define an $n$-ary predicate $E_n$ as follows:
 $$((A_1, p_1, B_1), ..., (A_n, p_n, B_n)) \in E^{\mathcal{E}^{\mathrm{ex}}_M(k)}_n$$
 $$\Updownarrow$$
 $$\exists g \in \mathrm{Aut}(M) \text{ s.t. for all } i \in [1, n], \,  g(A_i) = B_i \text{ and } g \restriction A_i = p_i;$$
 \item\label{dom} we define a binary predicate $\mathrm{Dom}$ (which stands for ``domain'')~such~that \newline $(A, p, B) \in \mathcal{E}^{\mathrm{ex}}_M(k)$ is in relation with $C \in \mathbf{A}_k(M)$ if and only if $A = C$;
 \item\label{cod} we define a binary predicate $\mathrm{Cod}$ (which stands for ``codomain'') such that $(A, p, B) \in \mathcal{E}^{\mathrm{ex}}_M(k)$ is in relation with $C \in \mathbf{A}_k(M)$ if and only if $B = C$.
\end{enumerate}
When we write $\mathcal{E}^{\mathrm{ex}}_M$ (so without the $k$) we mean $\mathcal{E}^{\mathrm{ex}}_M(\omega)$.
\end{definition}

Although the definition of the structure is rather technical, each of its predicates is required for the following theorem. Among the predicates involved, the relations $E_n$ play the central role, as they express precisely when partial isomorphisms between algebraically closed subsets can be amalgamated.

\begin{fact}
As pointed out in \cite{Paolini_BLMS} the following predicates are definable in $\mathcal{E}^{\mathrm{ex}}_M(k)$:
\begin{itemize}
\item a ternary predicate which holds exactly when the following happens:
$$K_1 \xrightarrow{p_1} K_2 \xrightarrow{p_2} K_3 = K_1 \xrightarrow{p_2 \circ p_1} K_3.$$
\item a binary predicate which holds exactly when the following happens:
$$K_1 \xrightarrow{p_1} K_2 \xrightarrow{p_2} K_1 = K_1 \xrightarrow{\mathrm{id}_{K_1}} K_1.$$
\end{itemize}
\end{fact}

 The problem of determination of $\mathrm{Aut}(M) \cong_{\mathrm{top}} \mathrm{Aut}(N)$ reduces completely to the problem of determination of $\mathcal{E}^{\mathrm{ex}}_M(k) \cong \mathcal{E}^{\mathrm{ex}}_N(k)$ as soon as $k \geq \mathrm{max}\{k_M, k_N\}$.

 \begin{theorem}\label{main_theorem} Let $M, N$ be countable saturated structures with the Lascar Property. Then
 \begin{enumerate}[(1)]
 \item if $\mathrm{Aut}(M) \cong_{\mathrm{top}} \mathrm{Aut}(N)$ then $\mathcal{E}^{\mathrm{ex}}_M(k) \cong \mathcal{E}^{\mathrm{ex}}_N(k)$, for all $0 < k \leq \omega$;
 \item if $\mathrm{max}\{k_M, k_N\} \leq k$, then $\mathcal{E}^{\mathrm{ex}}_M(k) \cong \mathcal{E}^{\mathrm{ex}}_N(k)$ implies $\mathrm{Aut}(M) \cong_{\mathrm{top}} \mathrm{Aut}(N)$;
        \item {for} $k_M \leq k \leq \omega$  
        \begin{align*}
            \mathrm{Aut}_{\mathrm{top}}(\mathrm{Aut}(M)) &\cong \mathrm{Aut}(\mathcal{E}^{\mathrm{ex}}_M(k))\\
            \alpha &\mapsto f_{\alpha}
        \end{align*}

 \end{enumerate}
\end{theorem}

\begin{proof} Modulo minor adjustments, the proof of this is as in \cite[Theorem 1.3]{Paolini_BLMS}.
\end{proof}

\begin{theorem1.2} Let $M, N$ be countable saturated structures with the Lascar Property. Then
 \begin{enumerate}[(1)]
 \item pointwise stabilizers of finitely generated algebraically closed subsets of $M$ are $G_\delta$-subgroups of $\mathrm{Aut}(M)$, and there is a Galois correspondence between them and the set of finitely generated algebraically closed subsets of $M$;
 \item the structure $\mathcal{E}^{\mathrm{ex}}_M$ associated to $M$ as in Definition \ref{def_expanded_12} is such that: $$\mathrm{Aut}(M) \cong_{\mathrm{top}} \mathrm{Aut}(N) \; \Leftrightarrow \; \mathcal{E}^{\mathrm{ex}}_M \cong \mathcal{E}^{\mathrm{ex}}_N;$$
 and $$\mathrm{Aut}_{\mathrm{top}}(\mathrm{Aut}(M)) \cong \mathrm{Aut}(\mathcal{E}^{\mathrm{ex}}_M).$$
 \end{enumerate}
 \end{theorem1.2}

\begin{proof}
We prove item (1). By Proposition \ref{char_point_stab}, the pointwise stabilizers $\mathcal{PS}(M)$ are characterized, purely group-theoretically, among the topological subgroups of~$G$. Indeed, the pointwise stabilizers are the minimal intersections of normal chains of open subgroups under the normal subgroup relation.

 We define the following maps:
\begin{align*}
    G_{(\cdot)}:\mathbf{A}(M) &\rightarrow \mathcal{PS}(M) \quad\quad&
    \mathrm{Fix}_G:\mathcal{PS}(M)&\rightarrow \mathbf{A}(M)
\end{align*}
where $\mathrm{Fix}_G(H)=\{m \in M: \forall h \in H (h(m)=m)\}$ for all $H \in \mathcal{PS}(M)$.

\noindent The map $\mathrm{Fix}_G$ is well defined and $\mathrm{Fix}_G(G_{(K)})=K$. Indeed, $\mathrm{Fix}_G(G_{(K)})\supseteq K$ is clear. Conversely, let $m \in \mathrm{Fix}_G(G_{(K)})$, then $\mathcal{O}(m/K)=\{m\}$, thus finite. Since $K$ is algebraically closed, $m \in K$. Therefore,  $\mathrm{Fix}_G(G_{(K)})\subseteq K$. 
Since $$G_{(\cdot)}(\mathrm{Fix}_G(G_{(K)}))=G_{(K)}\text{ and } \mathrm{Fix}_G(G_{(K)})=K,$$ we have a Galois correspondence between $\mathcal{PS}(M)$ and $\mathbf{A}(M)$.
\medskip

\noindent
Item (2) is a consequence of Theorem \ref{main_theorem} with $k=\omega$.
\end{proof}

\begin{theorem}\label{no_alge} Let $M$ be a countable saturated structure with the Lascar Property and such that $\mathrm{acl}_M(a)=\{a\}$ for every $a \in M$, then we have:
 $$\mathrm{Aut}_{\mathrm{top}}(\mathrm{Aut}(M)) \cong \mathrm{Aut}(\mathcal{E}_M),$$
where $\mathcal{E}_M$ is the orbital structure associated to $M$. In particular, if $M$ has in addition the small index property, then $\mathrm{Aut}(\mathrm{Aut}(M)) \cong \mathrm{Aut}(\mathcal{E}_M)$. 
\end{theorem}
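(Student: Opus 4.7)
The plan is to reduce the theorem to Theorem~\ref{main_theorem}(3) applied at the minimal level $k=1$, and then identify the resulting structure $\mathcal{E}^{\mrm{ex}}_M(1)$ with the orbital structure $\mathcal{E}_M$. First I would verify that $k_M = 1$ under our hypothesis: since $\mrm{acl}^{\mrm{g}}_M(a) = \{a\}$ for every $a \in M$, any chain in Notation~\ref{kM_notation} becomes a chain of singletons $\{a_1\} \subsetneq \cdots \subsetneq \{a_k\}$, which is impossible for $k \geq 2$. Hence Theorem~\ref{main_theorem}(3) applies with $k=1$ and gives $\mrm{Aut}_{\mrm{top}}(\mrm{Aut}(M)) \cong \mrm{Aut}(\mathcal{E}^{\mrm{ex}}_M(1))$, so the task reduces to identifying $\mathcal{E}^{\mrm{ex}}_M(1)$ with $\mathcal{E}_M$.

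Next I would establish the natural identification $\mathbf{A}_1(M) = \{\{a\} : a \in M \setminus \mrm{dcl}^{\mrm{g}}_M(\emptyset)\}$. The inclusion $\supseteq$ follows immediately from our hypothesis, since any such $\{a\}$ is algebraically closed and admits no proper intermediate closed set above $\mrm{dcl}^{\mrm{g}}_M(\emptyset)$. Conversely, given $K \in \mathbf{A}_1(M)$ and any $a \in K \setminus \mrm{dcl}^{\mrm{g}}_M(\emptyset)$, we have $\{a\} = \mrm{acl}^{\mrm{g}}_M(\{a\}) \subseteq K$, and the definition of $\mathbf{A}_1(M)$ forces $K = \{a\}$. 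This upgrades to an identification of $M^{\mrm{ex}}_1$ with the set of pairs $(a,b) \in (M\setminus \mrm{dcl}^{\mrm{g}}_M(\emptyset))^2$ lying in the same $\mrm{Aut}(M)$-orbit, via $(\{a\}, a\mapsto b, \{b\}) \leftrightarrow (a,b)$.

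The main step is then to verify that $\mrm{Aut}(\mathcal{E}^{\mrm{ex}}_M(1)) \cong \mrm{Aut}(\mathcal{E}_M)$. Given an automorphism $f$ of $\mathcal{E}^{\mrm{ex}}_M(1)$, the unary predicate of Definition~\ref{def_expanded_12}(\ref{def_expanded_12_unary}) forces $f$ to restrict to a bijection $f_0$ of $\mathbf{A}_1(M)$, i.e., of $M \setminus \mrm{dcl}^{\mrm{g}}_M(\emptyset)$; the predicates $E_n$ from Definition~\ref{def_expanded_12}(\ref{def_expanded_12_binary}), evaluated on identity triples, are by construction exactly the $\mrm{Aut}(M)$-orbit relations on tuples, so $f_0$ extends by the identity on $\mrm{dcl}^{\mrm{g}}_M(\emptyset)$ (which is pointwise fixed by $\mrm{Aut}(M)$) to an automorphism of $\mathcal{E}_M$. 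Conversely, any automorphism $g$ of $\mathcal{E}_M$ preserves same-orbit pairs and so acts on $M^{\mrm{ex}}_1$ via $(\{a\}, a\mapsto b, \{b\}) \mapsto (\{g(a)\}, g(a)\mapsto g(b), \{g(b)\})$; one checks that this preserves all the remaining predicates $\mrm{Dom}$, $\mrm{Cod}$, the $P_n$'s, and the composition predicates of Definition~\ref{def_expanded_12}(7)-(8). The two assignments are mutually inverse group homomorphisms. Composing with Theorem~\ref{main_theorem}(3) yields the desired isomorphism.

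The main obstacle is bookkeeping rather than conceptual: one must verify that the auxiliary predicates of $\mathcal{E}^{\mrm{ex}}_M(1)$ carry no information beyond what is already encoded by the orbital structure. For $P_n$, the condition $P_n(\{a\},\{b_1\},\ldots,\{b_n\})$ unwinds to $a \in \mrm{acl}^{\mrm{g}}_M(\{b_1,\ldots,b_n\})$, which is preserved by any bijection of $M$ preserving $\mrm{Aut}(M)$-orbits on tuples; the composition predicates become tautologous on identity triples, and generally follow from the $E_n$'s once the second coordinate is recovered from the $E_2$ relation on pairs $(\{a\},a\mapsto b,\{b\})$. Finally, the ``in particular'' clause is immediate from the small index property, which guarantees $\mrm{Aut}_{\mrm{top}}(\mrm{Aut}(M)) = \mrm{Aut}(\mrm{Aut}(M))$.
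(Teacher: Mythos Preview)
Your strategy matches the paper's: reduce to Theorem~\ref{main_theorem}(3) at level $k=1$, then identify $\mrm{Aut}(\mathcal{E}^{\mrm{ex}}_M(1))$ with $\mrm{Aut}(\mathcal{E}_M)$. The paper handles the second step by asserting bi-interpretability of $\mathcal{E}^{\mrm{ex}}_M(1)$ and $\mathcal{E}_M$ and invoking (one direction of) the Ahlbrandt--Ziegler theorem; you carry it out directly, which is fine and arguably more transparent.

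There is, however, one incorrect step in your forward direction. You write that ``the predicates $E_n$ \ldots, evaluated on identity triples, are by construction exactly the $\mrm{Aut}(M)$-orbit relations on tuples''. This is false: $E_n\big((\{a_1\},\mrm{id},\{a_1\}),\ldots,(\{a_n\},\mrm{id},\{a_n\})\big)$ asks for some $g\in\mrm{Aut}(M)$ with $g(a_i)=a_i$ for all $i$, which is always witnessed by $g=\mrm{id}_M$; so $E_n$ restricted to identity triples is trivially satisfied and carries no orbital information. What you actually need is $E_n$ on \emph{general} triples together with $\mrm{Dom}$ and $\mrm{Cod}$. Given $f\in\mrm{Aut}(\mathcal{E}^{\mrm{ex}}_M(1))$ and tuples $\bar a,\bar b$ in the same $\mrm{Aut}(M)$-orbit, the triples $(\{a_i\},a_i\mapsto b_i,\{b_i\})$ satisfy $E_n$; preservation of $\mrm{Dom}$ and $\mrm{Cod}$ forces $f$ to send each of these to a triple with domain $\{f_0(a_i)\}$ and codomain $\{f_0(b_i)\}$, hence (the middle map being determined between singletons) to $(\{f_0(a_i)\},f_0(a_i)\mapsto f_0(b_i),\{f_0(b_i)\})$; now preservation of $E_n$ yields that $(f_0(a_i))_i$ and $(f_0(b_i))_i$ lie in the same orbit. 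With this correction your argument goes through, and the same observation shows that $f\mapsto f_0$ and $g\mapsto\tilde g$ are mutually inverse.
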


\begin{proof} The argument is essentially the same as in the proof of \cite[Theorem 1.4]{Paolini_BLMS}, noting that the implication of the Ahlbrandt--Ziegler Theorem used there does not require the assumption of $\omega$-categoricity (see \cite[p.~226, 5.4.8(b)]{hodges}).

\end{proof}

\section{When are our assumptions satisfied?}\label{section4}

In this section, we identify sufficient conditions under which the Lascar Property holds.
Throughout this section, we write $AB$ to denote  $A \cup B$. 

\begin{definition} \label{rkdef}
    Let $\mathrm{rk}$ be an integer-valued function defined on $\mathcal{P}(M)$. 
    We say that $\mathrm{rk}$ is a dimension function on $M$ if, for all $A,B \in \mathbf{A}(M)$ and $C\subseteq M$ we have:
    \begin{enumerate}[(1)]
        \item $\mathrm{rk}(g(C))=\mathrm{rk}(C)$ for all $g \in G$;
        \item $0\leq \mathrm{rk}(A) \leq \mathrm{rk}(AB)\leq \mathrm{rk}(A) + \mathrm{rk}(B)- \mathrm{rk}(A\cap B)$;
        \item If $A \subseteq B$ and $\mathrm{rk}(A)=\mathrm{rk}(B)$, then $A=B$;
        \item $\mathrm{rk}(A)<\omega$.
    \end{enumerate} 
    If $A,B \subseteq M$, then we define $\mathrm{rk}(A/B)=\mathrm{rk}(AB)-\mathrm{rk}(B)$. 
\end{definition}
 
We will need the notion of a stationary independence relation from \cite{stationary_indep}.

\begin{definition}\label{stationary}
We say that the ternary relation $\ind$ on $\mathcal{P}(M)$ is a stationary independence relation compatible with $\mathrm{acl}_M$ if for every $A,B,C,D \in \mathbf{A}(M)$, $\mathrm{acl}_M$-closed set $E$ and finite tuples $a,b \in M^{< \omega}$ the following conditions are met:
    \begin{enumerate}[(1)]
        \item (Compatibility) We have $a\ind_b C \Leftrightarrow a\ind_{\mathrm{acl}_M(b)} C$ and $$a\ind_B C \Leftrightarrow e \ind_B C  \text{ for all } e \in \mathrm{acl}_M(a,B)\Leftrightarrow \mathrm{acl}_M(a,B)\ind_B C.$$
        \item (Invariance) If $g \in G$ and $A\ind_B C $, then $g(A)\ind_{g(B)} g(C)$.
        \item (Monotonicity) If $A\ind_B CD$, then $A\ind_B C$ and $A\ind_{BC} D$ and if $A\ind_B CE$, then $A\ind_B C$.
        \item (Transitivity) If $A\ind_B C$ and $A\ind_{BC} D$, then $A \ind_B CD$.
        \item (Symmetry) If $A\ind_B C$, then $C \ind_B A$.
        \item (Existence) There is $g \in G_{(B)}$ with $g(A) \ind_B C$.
        \item (Stationary) Suppose $B\subseteq A,D$, $A\ind_B C$ and $D\ind_B C$. If $h: A \rightarrow D$ is the identity of $B$ and extends to an automorphism of $M$, then there is some $k \in \mathrm{Aut}(M)$ such that $k\supseteq h \cup \mathrm{id}_C$.
    \end{enumerate}
\end{definition}

    \begin{definition} We define $A \rkindep[B] C \Leftrightarrow \mathrm{rk}(A/BC)=\mathrm{rk}(A/B)$.
\end{definition}

    \begin{convention} Below when we write ``Suppose that $\rkindep$ is a stationary independence relation'' we mean ``$\rkindep$ is a stationary independence relation compatible with $\mathrm{acl}_M$''.
\end{convention}

\begin{definition}\label{def_WCB}
    Let $A, B \in \mathbf{A}(M)$, we say that $C$ is the weak canonical base for the pair $(A, B)$ if it is the smallest set in $\mathbf{A}(M)$ such that $A\ind_C B$ and we denote it by $\mathrm{cb}(A/B)$ (so in particular when we write this we implicitly assume that such a $C$ exists). We say that $M$ has weak canonical bases (for $\ind$) if every pair $(A,B)$, where $A \in \mathbf{A}(M)$ and $B$ is an $\mathrm{acl}_M$-closed set, admits a weak canonical base.
\end{definition}

\begin{theorem}\label{lascar_ind_th} Suppose $\rkindep$ is a stationary independence relation.
    \begin{enumerate}[(1)]
        \item For every $A, B \in \mathbf{A}(M)$, if $A \rkindep_{A \cap B} B$, then $G_{(A\cap B)}=\langle G_{(A)} \cup G_{(B)} \rangle$.
    
        \item Suppose (in addition) that $M$ has weak canonical bases. Then, for every $A, B \in \mathbf{A}(M)$, $G_{(A\cap B)}=\langle G_{(A)} \cup G_{(B)} \rangle$.
    \end{enumerate}
\end{theorem}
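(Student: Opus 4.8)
I would prove both statements by deploying the axioms of \ref{stationary}, reducing (2) to (1). One inclusion is free: since $A\cap B\subseteq A$ and $A\cap B\subseteq B$ we have $\langle G_{(A)}\cup G_{(B)}\rangle\subseteq G_{(A\cap B)}$; write $D=A\cap B$, which lies in $\mathbf{A}(M)$ (an $\mrm{acl}_M$-closed set of finite rank is finitely generated). The engine of (1) is a decomposition lemma: \emph{if $u\in G_{(D)}$ satisfies $A\rkindep[D]B$ and $u(B)\rkindep[D]A$, then $u\in G_{(A)}G_{(B)}$.} To prove it I apply Stationarity (\ref{stationary}(7)) to the partial isomorphism $h=u\!\restriction\!B\colon B\to u(B)$, which fixes $D$ pointwise and extends to $u$, with ``base'' $D$ and ``independent-over-base'' set $A$; the two required independences are $B\rkindep[D]A$ (Symmetry) and $u(B)\rkindep[D]A$ (hypothesis). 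This yields $\sigma\in\mrm{Aut}(M)$ with $\sigma\!\restriction\!A=\mrm{id}_A$ and $\sigma\!\restriction\!B=u\!\restriction\!B$, so that $\tau:=\sigma^{-1}u\in G_{(B)}$ and $u=\sigma\tau$.

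For (1) itself: given $f\in G_{(D)}$, put $E=\mrm{acl}^{\mrm{g}}_M(A\cup B)$ and $F=\mrm{acl}^{\mrm{g}}_M(A\cup B\cup f(A)\cup f(B))$ (both in $\mathbf{A}(M)$) and use Existence (\ref{stationary}(6)) to choose $g\in G_{(D)}$ with $g(E)\rkindep[D]F$. Monotonicity and Symmetry then give $g(B)\rkindep[D]A$ and $f(B)\rkindep[D]g(A)$ (shrinking $g(E)$ and $F$ to appropriate subsets), and by Invariance (applied with $g^{-1}$) the second is equivalent to $(g^{-1}f)(B)\rkindep[D]A$. Since $g,g^{-1}f\in G_{(D)}$ and $A\rkindep[D]B$ by hypothesis, the decomposition lemma applies to $u=g$ and to $u=g^{-1}f$, and therefore $f=g\,(g^{-1}f)\in\langle G_{(A)}\cup G_{(B)}\rangle$, proving (1).

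For (2) I would induct on $\mrm{rk}(\mrm{acl}^{\mrm{g}}_M(A\cup B))$; whenever $A\rkindep[D]B$ (in particular the base case) this is (1). Otherwise set $C=\mrm{cb}(A/B)$, which exists by hypothesis, lies in $\mathbf{A}(M)$, satisfies $C\subseteq B$ (since $B$ is itself a candidate, so the smallest one sits below it), and $D\subseteq C$ (from $A\rkindep[C]B$ one forces $D\rkindep[C]D$, i.e.\ $D\subseteq C$ by \ref{rkdef}(3)). Let $A'=\mrm{acl}^{\mrm{g}}_M(A\cup C)$; by Compatibility $A'\rkindep[C]B$, and the same trick gives $A'\cap B=C$, so (1) applied to $(A',B)$ gives $\langle G_{(A')}\cup G_{(B)}\rangle=G_{(C)}$, hence $G_{(C)}\leq\langle G_{(A)}\cup G_{(B)}\rangle$ (as $G_{(A')}\leq G_{(A)}$). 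If $C\subsetneq B$, then $A'\subsetneq E$ (else $B\subseteq A'$ forces $C=A'\cap B=B$), so the pair $(A,C)$, which has $A\cap C=D$, has strictly smaller rank; by induction $G_{(D)}=\langle G_{(A)}\cup G_{(C)}\rangle\leq\langle G_{(A)}\cup G_{(B)}\rangle$, and we are done. If $C=B$, I run the symmetric argument with $C'=\mrm{cb}(B/A)\subseteq A$; this concludes unless also $C'=A$.

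The leftover case — $\mrm{cb}(A/B)=B$, $\mrm{cb}(B/A)=A$, with $D$ properly inside both $A$ and $B$ (this really occurs, e.g.\ for suitable algebraically closed subfields in a model of $\mrm{ACF}_0$) — is the genuine obstacle, since the canonical-base reduction now yields nothing. Here I would argue via a generic copy: by Existence pick $\rho\in G_{(D)}$ with $A^*:=\rho(A)\rkindep[D]E$; then $A\rkindep[D]A^*$ and $A\cap A^*=D$, so (1) applied to $(A,A^*)$ gives $G_{(D)}=\langle G_{(A)}\cup G_{(A^*)}\rangle$, and since $G_{(A^*)}=\rho G_{(A)}\rho^{-1}$ it suffices to show that $\langle G_{(A)}\cup G_{(B)}\rangle$ contains an automorphism carrying $A$ to a copy independent from $A$ over $D$ (equivalently, that $\rho$ above can be chosen inside $\langle G_{(A)}\cup G_{(B)}\rangle$). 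One produces such an automorphism by composing successively ``generic'' elements of $G_{(B)}$ and of $G_{(A)}$, each choice legitimized by Existence, so as to push the image of $A$ into the required position; making this iteration precise, and verifying the intermediate independences, is the main technical point of the argument for (2), and is where the weak-canonical-bases hypothesis together with the full force of Existence and Stationarity come into play.
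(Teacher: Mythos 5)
Part (1) of your proposal is correct and is essentially the paper's own argument: you use Existence to produce a copy of (part of) the configuration that is generic over everything in sight, and then apply Stationarity twice to split a given element of $G_{(A\cap B)}$ into a product of an element fixing one side pointwise and an element fixing the other. Your ``decomposition lemma'' is just a clean packaging of the two Stationarity applications that the paper performs directly, so there is nothing to object to here.

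Part (2), however, has a genuine gap, and it sits exactly where the real difficulty of the theorem lies. Your canonical-base reduction only makes progress when $\mrm{cb}(A/B)\subsetneq B$ or $\mrm{cb}(B/A)\subsetneq A$, and, as you yourself note, the leftover case ($\mrm{cb}(A/B)=B$, $\mrm{cb}(B/A)=A$, $A \nrkindep_{A\cap B} B$) is non-vacuous: it occurs already in $\mrm{ACF}_0$, e.g.\ for $A=\mrm{acl}_M(a_1,a_2)$ a generic point of the plane and $B=\mrm{acl}_M(b_1,b_2)$ the parameters of a generic line through it. This is precisely the non-modular configuration that part (2) exists to handle, so what you have produced is a reduction of the theorem to its hardest instance together with a sketch (``compose successively generic elements of $G_{(B)}$ and $G_{(A)}$'') that comes with no termination argument. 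The missing idea is the choice of induction quantity: the paper inducts not on $\mrm{rk}(\mrm{acl}_M(A\cup B))$ but on the deviation $n=\mrm{rk}(A/A\cap B)-\mrm{rk}(A/B)$ from independence. In the inductive step one takes $g\in G_{(B)}$ with $g(A)\rkindep_{B} \mrm{acl}_M(A\cup B)$ and sets $A'=g(A)$; then $G_{(A')}=gG_{(A)}g^{-1}\leq\langle G_{(A)}\cup G_{(B)}\rangle$, one checks $A\cap A'=A\cap B$ (this is where the weak canonical base $\mrm{cb}(A/A'\cap A)$ is used), and a rank computation involving $C=\mrm{cb}(A/B)$ shows that $A'\nrkindep_{A} B$ unless $A\rkindep_{A\cap B}B$, whence $\mrm{rk}(A/A\cap A')-\mrm{rk}(A/A')<n$. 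The induction hypothesis applied to the pair $(A,A')$ then closes the argument. This is exactly the iteration you gesture at, made to terminate; without identifying the strictly decreasing quantity, your proof of (2) is incomplete.
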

\begin{proof} 
        We prove item (1). We show the containment from left to right, since the other one is immediate. Let $g \in G_{(A\cap B)}$. By Existence, there is $ h\in 
        G_{(A \cap B)}$ such that:
        \begin{equation*}\tag{$\star$}\label{equation_label1}
            h(A)\textstyle\rkindep_{A\cap B} ABg(A). 
        \end{equation*}
        The automorphism $h: A \mapsto h(A)$ fixes $A\cap B$, $A \rkindep_{A \cap B} B$ by hypothesis and $h(A) \rkindep_{A \cap B} B$ by (\ref{equation_label1}) and Monotonicity. So, by Stationarity, there is $k \supseteq h \cup \mathrm{id}_B$, i.e., $k \in G_{(B)}$ and $k(A)=h(A)$. The subgroup $G_{(K)} \leq \langle G_{(A)} \cup G_{(B)} \rangle$ where $K=k(A)$. Indeed, every $f \in G_{(K)}$, can be written as:
        \begin{equation*}
            f= \underbrace{k}_{\in G_{(B)}} \underbrace{k^{-1} f k}_{\in G_{(A)}} \underbrace{k^{-1}}_{\in G_{(B)}}.
        \end{equation*}
        The automorphism $g: A \mapsto g(A)$ fixes $A\cap B$, and the following two independence relations $A \rkindep_{A \cap B} k(A)$ and $g(A) \rkindep_{A \cap B} k(A)$ are guaranteed by (\ref{equation_label1}), $h(A)=k(A)$, Symmetry, and Monotonicity. 
        So, by Stationarity, there is $t \supseteq g \cup \mathrm{id}_K$, i.e., $t \in G_{(K)}$ and $t(A)=g(A)$. The automorphism $t^{-1}g \in G_{(A)}$, therefore $g \in \langle G_{(A)} \cup G_{(B)} \rangle$.

    \medskip \noindent
        Concerning item (2), this is by induction on $n=\mathrm{rk}(A/A\cap B) - {\mathrm{rk}}(A/B)$. The base case $n=0$ implies $A \rkindep_{A \cap B} B$, so it holds by (1). 
        Let $H=\langle G_{(A)} \cup G_{(B)} \rangle$. Suppose $A \nrkindep_{A \cap B} B$ and let $C=\mathrm{cb}(A/B)$. By Existence, $A \rkindep_B B$, so, by definition $C\subseteq B$ and $A\rkindep_C B$. By Existence, there is $g \in G_{(B)}$ such that $g(A) \rkindep_B AB$. Let $A'=g(A)$ and note that $A\cap B=A' \cap B$. By Invariance, $g(A) \rkindep_{g(C)} g(B)$, i.e., $A'\rkindep_C B$ and, by Transitivity, $A'\rkindep_C B$ and $A'\rkindep_{BC} AB$ imply $A'\rkindep_C AB$.

        \noindent Note that we have $A\cap B\subseteq A \cap A'$. Let $D=\mathrm{cb}(A/A'\cap A)$ that, as above, $D \subseteq A$. From $g(A) \rkindep_B AB$ , by Monotonicity, $A\rkindep_B A'\cap B$. So $D\subseteq B$, in particular, $D\subseteq A\cap B$. Using $A\cap B\subseteq A\cap A' $ and Monotonicity, $A\rkindep_C A'\cap A$ implies $A\rkindep_{A\cap B} A'\cap A$. In terms of rank, this implies $\mathrm{rk}(A\cap A')=\mathrm{rk}(A\cap B)$. By \ref{rkdef}(3) we have $A\cap B = A \cap A'$.

        \noindent Suppose $A'\rkindep_A B$. By Transitivity, $A'\rkindep_A AB$ and hence we have that $C':=\mathrm{cb}(A'/\mathrm{acl}_M(AB)) \subseteq A$. But also for $C'\subseteq C \subseteq B$. So $C'\subseteq A\cap B=A'\cap B$ and by Monotonicity, $A' \rkindep_{A'\cap B} B$. By Invariance, we have $A\rkindep_{A\cap B} B$, which contradicts what we are assuming.

        \noindent So we know that $A'\nrkindep_A B$ and, by Symmetry, $B\nrkindep_A A'$. In terms of the rank, this implies
        $$\mathrm{rk}(B/AA')<\mathrm{rk}(B/A)$$ so by definition $$\mathrm{rk}(AA'B)-\mathrm{rk}(AA') < \mathrm{rk}(AB)-\mathrm{rk}(A).$$ Using $\mathrm{rk}(A)=\mathrm{rk}(A')$ we have
        $$\mathrm{rk}(A)+\mathrm{rk}(A')-\mathrm{rk}(AA')< \mathrm{rk}(AB)-\mathrm{rk}(AA'B)+\mathrm{rk}(A).$$
        The right hand side is $$-\mathrm{rk}(A'/AB) +\mathrm{rk}(A)= \mathrm{rk}(A) - \mathrm{rk}(A'/B)=\mathrm{rk}(A)-\mathrm{rk}(A/B)$$ using the fact that $A'\rkindep _B A$ and $\mathrm{rk}(AB)=\mathrm{rk}(A'B)$. So putting this together and using $A\cap B = A \cap A'$:
        $$\mathrm{rk}(A)+\mathrm{rk}(A')-\mathrm{rk}(A\cap A') -\mathrm{rk}(AA') < \mathrm{rk}(A/A\cap B)-\mathrm{rk}(A/B) =n. $$ Thus $\mathrm{rk}(A/A\cap A') - \mathrm{rk}(A/A') <n$. As $G_{(A)}$,$G_{(A')}\leq H$ and $A\cap A'=A\cap B$, we can conclude by induction that $H=G_{(A\cap B)}$.
\end{proof}

\begin{theorem}\label{th_Lascar_baguette}
    Let $M$ be a countable structure satisfying the following conditions:
    \begin{enumerate}[(1)]
        \item there is no strictly decreasing sequence $A_0 \supsetneq A_1 \supsetneq \dots \supsetneq A_n \supsetneq \dots$ where each $A_n \in \mathbf{A}(M)$.
        \item for every $A, B \in \mathbf{A}(M)$, $G_{(A\cap B)}=\langle G_{(A)} \cup G_{(B)} \rangle$.
\end{enumerate}
Then $M$ has the Lascar Property.
\end{theorem}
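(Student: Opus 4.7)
The plan is to build the desired $K\in\mathbf{A}(M)$ via a minimality argument over the Noetherian poset $(\mathbf{A}(M),\subseteq)$, using hypothesis~(1) for the existence of a minimal element and hypothesis~(2) to force $H$-invariance as a set.

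Since $H$ is open in the topology of pointwise convergence on $G=\mrm{Aut}(M)$, there is a finite $A\subseteq_\omega M$ with $G_{(A)}\leq H$; hence $G_{(\mrm{acl}^{\mrm{g}}_M(A))}\leq G_{(A)}\leq H$ and the family
$$\mathcal{C}:=\{L\in\mathbf{A}(M):G_{(L)}\leq H\}$$
is non-empty. By hypothesis~(1), $\mathcal{C}$ admits a minimal element $K$; this will be my candidate. The inclusion $G_{(K)}\leq H$ is automatic, so the core of the proof is $H\leq G_{\{K\}}$. For this, pick any $h\in H$. Writing $K=\mrm{acl}^{\mrm{g}}_M(\bar a)$ for a finite tuple, one has $h(K)=\mrm{acl}^{\mrm{g}}_M(h(\bar a))\in\mathbf{A}(M)$; and $h\in H$ yields $hHh^{-1}=H$, so $G_{(h(K))}=hG_{(K)}h^{-1}\leq H$, placing $h(K)\in\mathcal{C}$. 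Applying hypothesis~(2) to the pair $(K,h(K))\in\mathbf{A}(M)^2$ then gives
$$G_{(K\cap h(K))}=\langle G_{(K)}\cup G_{(h(K))}\rangle\leq H,$$
so, viewing $K\cap h(K)\in\mathbf{A}(M)$, it also lies in $\mathcal{C}$; minimality of $K$ forces $K\cap h(K)=K$, i.e.\ $K\subseteq h(K)$. Replacing $h$ by $h^{-1}\in H$ gives $K\subseteq h^{-1}(K)$, so $h(K)\subseteq K$ and hence $h(K)=K$, i.e.\ $h\in G_{\{K\}}$. (A more direct variant of the last step uses hypothesis~(1) on the descending chain $K\supseteq h^{-1}(K)\supseteq h^{-2}(K)\supseteq\cdots$ in $\mathbf{A}(M)$, which must stabilise and therefore also forces $h(K)=K$.)

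The one point that will need care, and which I see as the main obstacle, is the implicit assertion that $K\cap h(K)$ really is an element of $\mathbf{A}(M)$ (not just a Galois algebraically closed subset of one): this is natural in view of how hypothesis~(2) is phrased, since the pointwise stabilizer on its left-hand side is meant to arise from a member of $\mathbf{A}(M)$; and in the concrete settings of Section~\ref{section4}, where a rank function in the sense of \ref{rkdef} is available, it follows from the finiteness of $\mrm{rk}(K\cap h(K))$ together with \ref{rkdef}(3). I would therefore either record closure of $\mathbf{A}(M)$ under intersection as an implicit structural feature consistent with (2), or derive it from the ambient framework, after which the minimality argument above immediately yields \ref{hyp_intro}(\ref{hyp_2b}).
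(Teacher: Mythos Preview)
Your proof is correct and follows the same overall strategy as the paper's: pick a minimal $K\in\mathbf{A}(M)$ with $G_{(K)}\leq H$, observe that $G_{(h(K))}=hG_{(K)}h^{-1}\leq H$ for $h\in H$, apply hypothesis~(2) to descend to $K\cap h(K)$, and invoke minimality to force $h(K)=K$.

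The one noteworthy difference is how minimality is set up and exploited. The paper argues via the rank function $\mrm{rk}$ of Definition~\ref{rkdef}: it picks $K$ of least rank, uses invariance to get $\mrm{rk}(K)=\mrm{rk}(h(K))$, deduces $\mrm{rk}(K\cap h(K))=\mrm{rk}(K)$ from minimality, and then applies the submodularity inequality \ref{rkdef}(2) together with \ref{rkdef}(3) to conclude $K=h(K)$. You instead work purely order-theoretically from the DCC in hypothesis~(1), obtaining a $\subseteq$-minimal $K$ and using $K\cap h(K)\subseteq K$ plus symmetry in $h,h^{-1}$ to close the argument. Your route is arguably more faithful to the abstract statement of the theorem, which does not postulate a rank, whereas the paper's proof implicitly takes place inside the ambient framework of Section~\ref{section4}. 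You also correctly isolate the one genuine technicality---that $K\cap h(K)$ must lie in $\mathbf{A}(M)$ for the minimality step to apply---which the paper uses implicitly as well and which is unproblematic in the presence of $\mrm{rk}$.
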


\begin{remark}
    If $\rkindep$ is a stationary independence relation, the hypotheses of the Theorem hold. Indeed, the first is a consequence of \ref{rkdef}(3) and (4), and the second is \ref{lascar_ind_th}.
\end{remark}

\begin{proof}
Let $H \leq G$ be open, then there is a finite $A$ such that $G_{(A)} \leq H$ (see \cite[4.1.5(b)]{hodges}). Hence, $G_{(\mathrm{acl}_M(A))} \leq G_{(A)} \leq H$, i.e., $H$ contains the pointwise stabilizer of at least one finite-dimensional algebraically closed set of $M$. Let $K$ be of the smallest dimension with respect to this property, i.e., being contained in $H$. Note that the first hypothesis guarantees the existence of such a set.

\noindent Let $h \in H$ and $K'=h(K)$. Then $G_{(K')} = hG_{(K)}h^{-1} \leq H$ (as $h \in H$ and $G_{(K)} \leq H$). By \ref{rkdef}(1), the ranks of $K$ and $K'$ coincide. Since $G_{(K)} \leq H$ and $G_{(K')} \leq H$, by the second hypothesis, $G_{(K \cap K')} \leq H$. Therefore, $\mathrm{rk}(K \cap K')$ has to have dimension $k:= \mathrm{rk}(K)$, as otherwise, we would contradict the minimality of the dimension of $K$.  
Hence, by \ref{rkdef}(2), we have the following situation:
\begin{equation*}
    \mathrm{rk}(KK') \leq \mathrm{rk}(K) + \mathrm{rk}(K') - \mathrm{rk}(K \cap K') = k+k-k = k.
\end{equation*}
Thus $\mathrm{rk}(KK') = k$, therefore by \ref{rkdef}(3), we have $K = K'$. So, recalling that $K' = h(K)$, we have $h \in G_{\{K\}}$.

\end{proof}

\section{Applications} \label{sec_applications}

This final section demonstrates the framework's broad applicability by addressing both $\omega$-categorical and strongly minimal theories.
\begin{remark}
    Every $M$ $\omega$-categorical is saturated (see \cite[4.3.4]{tentziegler}) and the property of weak elimination of imaginaries is equivalent to the Lascar Property in the $\omega$-categorical framework (see e.g. \cite{Paolini_BLMS} and references therein). Thus, our framework generalizes the framework of \cite{Paolini_BLMS}.
\end{remark}

The discussion now turns to the strongly minimal case, which serves as the primary motivation for this work.

\begin{definition}
    We say that $(X,\mathrm{cl})$ is a pregeometry if $X$ is a set and $\mathrm{cl}: \mathcal{P}(X) \rightarrow \mathcal{P}(X)$ is a function satisfying the following five properties for every subsets $A,B$ of $X$ and $a,b \in X$:
    \begin{enumerate}[(1)]
        \item (Reflexivity) $A\subseteq \mathrm{cl}(A)$;
        \item (Monotonicity) if $A\subseteq B$ then $\mathrm{cl}(A)\subseteq \mathrm{cl}({B})$;
        \item (Finite Character) if $a \in \mathrm{cl}(A)$  then $a \in \mathrm{cl}(A')$ for a finite $A'\subseteq A$;
        \item (Idempotency) $\mathrm{cl}(\mathrm{cl}(A))=\mathrm{cl}(A)$;
        \item (Exchange Property) if $a \in \mathrm{cl}(A \cup \{b\})$ but $a \notin \mathrm{cl}(A)$, then $b \in \mathrm{cl}(A \cup \{a\})$.
    \end{enumerate}
    Furthermore, $(X,\mathrm{cl})$ is a geometry if, for every $a \in X$, we have $\mathrm{cl}(\{a\})=\{a\}$.
\end{definition}
\begin{definition}
    An automorphism of the pregeometry (or a geometry)  $(X,\mathrm{cl})$ is a bijection $f:X\rightarrow X$ such that $f(\mathrm{cl}(A))=\mathrm{cl}(f(A))$  for any $A\subseteq X$.
\end{definition}

If $M$ is a strongly minimal structure, then $(M, \mathrm{acl}_M)$ is a pregeometry (see e.g. \cite[5.7.5]{tentziegler}). 
Letting Morley rank be our dimension function and non-forking as our $\rkindep$ relation, the hypotheses from Section~\ref{section4} hold in a countable saturated strongly minimal structure with weak elimination of imaginaries (see {Chapters} 6 and 8 of \cite{marker2}). In particular, they hold for countable saturated algebraically closed fields and countably infinite-dimensional $\mathbb{K}$-vector spaces, for $\mathbb{K}$ a countable field. Indeed, they both have weak elimination of imaginaries; see, e.g., \cite[16.21]{poizat} for algebraically closed fields and \cite[4.4.7]{hodges} for $\mathbb{K}$-vector spaces.

\begin{remark}\label{kmpregeometriequal1}
     If $(M,\mathrm{acl}_M)$ is a pregeometry then $k_M=1$ (cf. \ref{kM_notation}). Indeed, suppose there exists $a,b \in M$ such that $\mathrm{acl}_M(\emptyset)\subsetneq \mathrm{acl}_M(a)\subseteq \mathrm{acl}_M(b)$. Since $a \in \mathrm{acl}_M(b)\setminus \mathrm{acl}_M(\emptyset)$, by Exchange Property, $b \in \mathrm{acl}_M(a)$. Therefore $\mathrm{acl}_M(b)\subseteq \mathrm{acl}_M(\mathrm{acl}_M(a))=\mathrm{acl}_M(a)$ by Idempotency. 
\end{remark}

\begin{definition}
    Let $(M,\mathrm{acl}_M)$ be a pregeometry. The canonical geometry of $(M,\mathrm{acl}_M )$ is $(\mathbb{G}_M,\mathrm{cl}_{\mathbb{G}_M})$ defined as follows:  
    \begin{itemize}
        \item $\mathbb{G}_M=\{\mathrm{acl}_M(a): a\in M\setminus \mathrm{acl}_M(\emptyset) \}$;
        \item $\mathrm{cl}_{\mathbb{G}_M}(X)=\{K \in \mathbb{G}_M: K\subseteq \mathrm{acl}_M(\bigcup X)\}$ for every $X\subseteq \mathbb{G}_M$.
    \end{itemize}
\end{definition}

\begin{lemma}\label{canonicalmaplemma}
  Let $M$ be a countable saturated structure with the Lascar Property and such that $(M,\mathrm{acl}_M)$ is a pregeometry. Then there exists a canonical map $\pi$ from $\mathrm{Aut}_{\mathrm{top}}(\mathrm{Aut}(M))$ into $\mathrm{Aut}(\mathbb{G}_M)$.
\end{lemma}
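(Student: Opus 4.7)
The plan is to exploit the Galois correspondence of Theorem~1.2(1). Given $\alpha \in \mrm{Aut}_{\mrm{top}}(\mrm{Aut}(M))$, the class $\mathcal{PS}(M)$ is definable in $G = \mrm{Aut}(M)$ (Proposition~\ref{char_point_stab}), so $\alpha$ permutes $\mathcal{PS}(M)$; composing with $\mrm{Fix}_G$ yields a bijection $f_\alpha : \mathbf{A}(M) \to \mathbf{A}(M)$ given by $f_\alpha(K) = \mrm{Fix}_G(\alpha(G_{(K)}))$. The natural definition is $\pi(\alpha) := f_\alpha \upharpoonright \mathbb{G}_M$, and one must verify that this restriction makes sense (i.e.\ $f_\alpha$ preserves $\mathbb{G}_M$), that it is a pregeometry automorphism, and that $\alpha \mapsto \pi(\alpha)$ is a group homomorphism.

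For the first point, note that the Galois correspondence is order-reversing, so $f_\alpha$ is an order-automorphism of the poset $(\mathbf{A}(M), \subseteq)$; in particular $f_\alpha$ fixes the minimum element $\mrm{acl}^{\mrm{g}}_M(\emptyset)$. Using the Exchange Property (in the spirit of Remark~\ref{kmpregeometriequal1}), I would verify that $\mathbb{G}_M$ is precisely the set of atoms of $(\mathbf{A}(M), \subseteq)$ lying strictly above $\mrm{acl}^{\mrm{g}}_M(\emptyset)$; this is an order-theoretic characterization, and so is automatically preserved by $f_\alpha$. For the pregeometry part, by the finite character of $\mrm{cl}_{\mathbb{G}_M}$ it suffices to check closure preservation on finite subsets, for which the key identity is $\mrm{acl}^{\mrm{g}}_M(K_1 \cup \cdots \cup K_n) = \mrm{Fix}_G(G_{(K_1)} \cap \cdots \cap G_{(K_n)})$. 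Since $\alpha$ is a group automorphism it commutes with intersections of subgroups, and combined with the preservation of $\subseteq$ already established this gives $f_\alpha(\mrm{acl}^{\mrm{g}}_M(K_1 \cup \cdots \cup K_n)) = \mrm{acl}^{\mrm{g}}_M(f_\alpha(K_1) \cup \cdots \cup f_\alpha(K_n))$, which translates directly to $f_\alpha(\mrm{cl}_{\mathbb{G}_M}(X)) = \mrm{cl}_{\mathbb{G}_M}(f_\alpha(X))$ for finite $X \subseteq \mathbb{G}_M$.

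Finally, the homomorphism property $\pi(\alpha \beta) = \pi(\alpha) \pi(\beta)$ reduces to $f_{\alpha\beta} = f_\alpha \circ f_\beta$, which is essentially immediate: if $\beta(G_{(K)}) = G_{(f_\beta(K))}$ then $(\alpha\beta)(G_{(K)}) = \alpha(G_{(f_\beta(K))}) = G_{(f_\alpha(f_\beta(K)))}$, and applying $\mrm{Fix}_G$ finishes the argument. The main obstacle is the first step --- giving a purely order-theoretic, and hence $f_\alpha$-invariant, characterization of the points of $\mathbb{G}_M$ inside $\mathbf{A}(M)$; once this atom characterization is in hand, the remaining verifications are essentially a routine unpacking of the Galois correspondence from Theorem~1.2(1).
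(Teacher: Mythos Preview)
Your overall strategy is sound and close to the paper's own approach: the paper factors through the isomorphism $\psi: \mrm{Aut}_{\mrm{top}}(\mrm{Aut}(M)) \cong \mrm{Aut}(\mathcal{E}^{\mrm{ex}}_M(1))$ of Theorem~\ref{main_theorem}(3) and then projects to $\mrm{Aut}(\mathbb{G}_M)$ by reading off the action on the identity triples $(K,\mrm{id}_K,K)$, which amounts to the same bijection $f_\alpha$ you describe. Your atom characterization of $\mathbb{G}_M$ inside $(\mathbf{A}(M),\subseteq)$ and the homomorphism check are fine.

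There is, however, a genuine error in the closure-preservation step. The identity $\mrm{acl}^{\mrm{g}}_M(K_1 \cup \cdots \cup K_n) = \mrm{Fix}_G(G_{(K_1)} \cap \cdots \cap G_{(K_n)})$ is false in general: the right-hand side is $\mrm{Fix}_G(G_{(K_1 \cup \cdots \cup K_n)}) = \mrm{dcl}^{\mrm{g}}_M(K_1 \cup \cdots \cup K_n)$, which is typically strictly smaller than the algebraic closure (already in $\mrm{ACF}_0$, elements algebraic over a set need not be fixed by its pointwise stabilizer). Equivalently, $\bigcap_i G_{(K_i)}$ is generally strictly larger than $G_{(\mrm{acl}^{\mrm{g}}_M(\bigcup_i K_i))}$, so ``$\alpha$ commutes with intersections'' does not give what you need. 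The fix is easy and purely order-theoretic: $\mrm{acl}^{\mrm{g}}_M(K_1 \cup \cdots \cup K_n)$ is the join $K_1 \vee \cdots \vee K_n$ in $(\mathbf{A}(M), \subseteq)$, and since you already have that $f_\alpha$ is an order-automorphism of this poset, it automatically preserves existing finite joins; this yields $f_\alpha(\mrm{acl}^{\mrm{g}}_M(\bigcup_i K_i)) = \mrm{acl}^{\mrm{g}}_M(\bigcup_i f_\alpha(K_i))$ and hence closure preservation. The paper's route encodes exactly this via the $P_n$ predicates of Definition~\ref{def_expanded_12}(\ref{theP_n_predicates}), whose preservation (proved in Theorem~\ref{main_theorem}(1)) uses $G_{(\mrm{acl}^{\mrm{g}}_M(\bigcup_i B_i))} \leq G_{(A)}$ rather than the intersection $\bigcap_i G_{(B_i)}$.
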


\begin{proof}
    By Remark \ref{kmpregeometriequal1} and Theorem \ref{main_theorem}(3), there is a well defined  group isomorphism $\psi$ from $\mathrm{Aut}_{\mathrm{top}}(\mathrm{Aut}(M))$ into $\mathrm{Aut}(\mathcal{E}^{{\mathrm{ex}}}_M(1))$.

    \noindent Let $\alpha \in \mathrm{Aut}(\mathcal{E}^{{\mathrm{ex}}}_M(1))$. Recalling Proposition \ref{char_point_stab_singletons} and Theorem \ref{main_theorem}, we can  define a map $\xi_\alpha$ from $\mathbb{G}_M$ onto itself by: for every $K \in \mathbb{G}_M$ $\xi_\alpha(K)=K'$ where $\alpha(K,\mathrm{id}_K,K)=(K',\mathrm{id}_{K'},K')$. It is easy to see that $\xi_\alpha \in \mathrm{Aut}(\mathbb{G}_M)$ and the map $\xi$ from $\mathrm{Aut}(\mathcal{E}^{{\mathrm{ex}}}_M(1))$ into $\mathrm{Aut}(\mathbb{G}_M)$ defined by $\xi(\alpha)=\xi_\alpha$ is a group homomorphism.

    \noindent Then $\pi=\xi\circ\psi$ is the natural group homomorphism from $\mathrm{Aut}_{\mathrm{top}}(\mathrm{Aut}(M))$ into $\mathrm{Aut}(\mathbb{G}_M)$ we are looking for.
\end{proof}

\begin{theorem1.3} Let $M$ be a countable saturated structure with the Lascar Property and suppose that $(M,\mathrm{acl}_M)$ is a pregeometry.

Suppose there is $\mathrm{Aut}(M)\leq H \leq \mathrm{Sym}(\omega)$ such that:
    \begin{enumerate}[(1)]
    \item $\varphi_H:H \rightarrow \mathrm{Aut}(\mathbb{G}_M)$ defined as $\varphi_H(h)(A)=h(A)$ is a well-defined homomorphism.
            \item $\gamma: H \rightarrow \mathrm{Aut}_{\mathrm{top}}(\mathrm{Aut}(M))$ defined as $h \mapsto h(\cdot)h^{-1}$ is well defined;
      \item there exists {a} homomorphism $\hat{\cdot}:\mathrm{Aut}(\mathbb{G}_M) \rightarrow H$ such that $\pi(\gamma(\hat{g}))=g$ where $\pi$ is the canonical map defined in Lemma \ref{canonicalmaplemma}.
    \end{enumerate} 
Then   $$\mathrm{Aut}_{\mathrm{top}}(\mathrm{Aut}(M))\cong \mathrm{ker}(\pi) \rtimes \mathrm{Aut}(\mathbb{G}_M)$$
and the following diagram commutes:
    $$\begin{tikzcd}
    H \arrow[r, "\varphi_H"] \arrow[d, "\gamma"]
    & \mathrm{Aut}(\mathbb{G}_M) \\
    \mathrm{Aut}_{\mathrm{top}}(\mathrm{Aut}(M)). \arrow{ur}{\pi} 
    &
    \end{tikzcd}$$
    \end{theorem1.3}
    
\begin{proof}
    Recall $\pi$, $\psi$ and $\xi$ introduced in the proof of Lemma \ref{canonicalmaplemma}.
    Then, easily, the diagram  $$\begin{tikzcd}
    H \arrow[r, "\varphi_H"] \arrow[d, "\gamma"]
    & \mathrm{Aut}(\mathbb{G}_M) \\
    \mathrm{Aut}_{\mathrm{top}}(\mathrm{Aut}(M)) \arrow{ur}{\pi} \arrow[r,"\psi" ,"\cong"']
    & \mathrm{Aut}(\mathcal{E}^{{\mathrm{ex}}}_M(1)) \arrow[u, "\xi" ]
    \end{tikzcd}$$ commutes. Indeed, let $h \in H$. Then for all $K \in \mathbb{G}_M$ $\xi\psi\gamma(h)(K)$ is the element $K'$ of $\mathbb{G}_M$ such that:
    $$(K',\mathrm{id}_{K'},K')=(\psi\gamma(h))(K,\mathrm{id}_K,K)=(h(K),\mathrm{id}_{h(K)},h(K))$$
    recalling the proof of Theorem \ref{main_theorem}(1). Therefore, $\varphi_H(h)(K)=h(K)=(\xi\psi\gamma)(h)(K)$.

     \noindent  
    Observe now that we have the following exact sequence:
    $$1\rightarrow \mathrm{ker}(\pi) \rightarrow \mathrm{Aut}_{\mathrm{top}}(\mathrm{Aut}(M)) \overset{\pi}{\rightarrow} \mathrm{Aut}(\mathbb{G}_M). $$ We prove that it is a split exact sequence.
    By items (3), for every $g \in \mathrm{Aut}(\mathbb{G}_M)$, we have that $\pi(\gamma(\hat{g}))=g$. Hence, the map $\gamma(\hat{\cdot})$ is a section of $\pi$, so we are done.
\end{proof}

    \begin{fact}\label{notation_phi} Suppose that $M$ is such that $ (M,\mathrm{acl}_M)$ is a pregeometry. Let $\alpha \in \mathrm{Aut}(M)$. Then $\alpha$ naturally induces a map $\varphi_\alpha$ from $\mathbb{G}_M$ onto itself by: $\varphi_\alpha(K)=\alpha(K)$, and it is clear that $\varphi_\alpha \in \mathrm{Aut}(\mathbb{G}_M)$. Moreover, the map $\varphi$ from $\mathrm{Aut}(M)$ to $\mathrm{Aut}(\mathbb{G}_M)$ defined by $\varphi(\alpha)=\varphi_\alpha$ is a group homomorphism, and we call it the natural map from $\mathrm{Aut}(M)$ to $\mathrm{Aut}(\mathbb{G}_M)$.
\end{fact}

\begin{lemma}\label{phiinjective}
    Let $M$ and $H$ be as in the hypotheses of Theorem~\ref{genevansth}.
    If the map $\varphi$ from Fact \ref{notation_phi} is injective, then $\mathrm{Aut}_{\mathrm{top}}(\mathrm{Aut}(M))\cong \mathrm{Aut}(\mathbb{G}_M)$ (and if $M$ has $\mathrm{SIP}$, then ``top'' can be removed).
\end{lemma}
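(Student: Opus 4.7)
The plan is to apply Theorem \ref{genevansth} directly, which already yields the semidirect product decomposition $\mrm{Aut}_{\mrm{top}}(\mrm{Aut}(M)) \cong \mrm{ker}(\pi) \rtimes \mrm{Aut}(\mathbb{G}_M)$. Thus the whole statement reduces to showing that $\mrm{ker}(\pi) = \{\mrm{id}\}$ under the extra hypothesis that $\phi$ is injective. The $\mrm{SIP}$ addendum will then be automatic: the small index property forces every group automorphism of $\mrm{Aut}(M)$ to preserve open subgroups (since they are exactly the subgroups containing a stabilizer of countable index), and hence to be topological, so $\mrm{Aut}_{\mrm{top}}(\mrm{Aut}(M)) = \mrm{Aut}(\mrm{Aut}(M))$.

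To prove that $\mrm{ker}(\pi) = \{\mrm{id}\}$, I would take $\alpha \in \mrm{ker}(\pi)$ and use the explicit description of $\pi$ coming from Lemma \ref{canonicalmaplemma}. Unwinding $\pi = \xi \circ \psi$ together with Proposition \ref{char_point_stab_singletons} and the identification of triples $(K,\mrm{id}_K,K)$ with $K$, the condition $\pi(\alpha) = \mrm{id}_{\mathbb{G}_M}$ is equivalent to $\alpha(G_{(K)}) = G_{(K)}$ for every $K \in \mathbb{G}_M$. Now for any $h \in \mrm{Aut}(M)$ and $K \in \mathbb{G}_M$, conjugation yields $h G_{(K)} h^{-1} = G_{(h(K))}$, where $h(K) \in \mathbb{G}_M$ because automorphisms of $M$ permute the canonical geometry. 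Applying the group homomorphism $\alpha$ and using the $\alpha$-invariance of both $G_{(K)}$ and $G_{(h(K))}$, one obtains
$$\alpha(h) \, G_{(K)} \, \alpha(h)^{-1} = G_{(h(K))}.$$
By the Galois correspondence of Theorem \ref{main_th}(1) between pointwise stabilizers and finitely generated Galois algebraically closed sets, this forces $\alpha(h)(K) = h(K)$ for every $K \in \mathbb{G}_M$. Consequently $\phi(\alpha(h)) = \phi(h)$, and the hypothesis that $\phi$ is injective yields $\alpha(h) = h$. Since $h$ was arbitrary, $\alpha = \mrm{id}$, and $\mrm{ker}(\pi)$ is trivial.

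No serious obstacle is anticipated: the argument is a short diagram chase once the kernel of $\pi$ is correctly translated, via Lemma \ref{canonicalmaplemma}, into the fixing condition on the family $\{G_{(K)} : K \in \mathbb{G}_M\}$. The only point requiring a bit of care is that applying $\alpha$ to both sides of $h G_{(K)} h^{-1} = G_{(h(K))}$ crucially uses the $\alpha$-invariance of \emph{both} sides — the left-hand side because $\alpha(G_{(K)}) = G_{(K)}$, and the right-hand side because $h(K)$ again lies in $\mathbb{G}_M$ — a subtlety that follows immediately from the fact that $(M,\mrm{acl}^\mrm{g}_M)$ is a pregeometry and that $\mrm{Aut}(M)$ acts on it.
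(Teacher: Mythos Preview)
Your argument is correct and is essentially the same as the paper's: both reduce to showing $\ker(\pi)$ is trivial, translate membership in the kernel into the condition $\alpha(G_{(K)})=G_{(K)}$ for all $K\in\mathbb{G}_M$, use conjugation to deduce $\alpha(h)(K)=h(K)$ for every $K$, and then apply injectivity of $\phi$. The only cosmetic difference is that the paper carries out the computation inside $\mathcal{E}^{\mrm{ex}}_M(1)$ (showing $\xi$ is injective and invoking the $\mrm{Cod}$ predicate), whereas you work directly with the topological automorphism $\alpha$ and pointwise stabilizers; your route is slightly more direct but the content is identical.
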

    
\begin{proof}
    Recalling from Lemma \ref{canonicalmaplemma} that $\pi=\xi\circ\psi$. Since $\psi$ is an isomorphism, $\pi$ is injective if and only if $\xi$ is injective; hence, to conclude, it suffices to show that $\xi$ is injective.
    Indeed, if $\mathrm{ker}(\pi)$ is trivial, by Theorem \ref{genevansth}, $\mathrm{Aut}_{\mathrm{top}}(\mathrm{Aut}(M))\cong  \mathrm{Aut}(\mathbb{G}_M)$.
    
    \noindent Let $\alpha \in \mathrm{Aut}(\mathcal{E}^{{\mathrm{ex}}}_M(1))$, and assume that $\xi_\alpha$ is the identity on $\mathbb{G}_M$ that is we have $\alpha((K,\mathrm{id}_K,K))=(K,\mathrm{id}_K,K)$. We have to prove that $\alpha$ is the identity on $\mathcal{E}^{{\mathrm{ex}}}_M(1)$. Let $(K,p,K') \in \mathcal{E}^{{\mathrm{ex}}}_M(1)$. By definition, {there} exists $g \in \mathrm{Aut}(M)$ such that $(K,p,K')=(K,g\upharpoonright K,g(K))$. Let $g_\alpha=\psi^{-1}(\alpha)$. Then $\alpha((K,p,K'))=(K,g_\alpha\upharpoonright K, g_\alpha(K))$. By the $\mathrm{Cod}$ predicate, we have that $g(K)=g_\alpha(K)$. So $g^{-1}g_\alpha(K)=K$. This means exactly that $\varphi(g^{-1}g_\alpha)$ is the identity on $\mathbb{G}_M$. By hypothesis, $\varphi$ is injective, so $g^{-1}g_\alpha$ is the identity. Therefore, $\alpha((K,g\upharpoonright K,K'))=(K,g\upharpoonright K, K')$, i.e., $\alpha$ is the identity.
\end{proof}

\begin{lemma}\label{evcompl}
    Let $M$ be a countable saturated structure with the Lascar Property and suppose that $(M,\mathrm{acl}_M)$ is a pregeometry.
    If the map $\varphi$ from Fact \ref{notation_phi} is an isomorphism, then $\mathrm{Aut}_{\mathrm{top}}(\mathrm{Aut}(M))\cong {\mathrm{Aut}(\mathbb{G}_M)}$. Moreover, every topological automorphism of $\mathrm{Aut}(M)$ is inner (and if in addition $M$ has $\mathrm{SIP}$, then ``topological" can be removed in ``topological automorphism" and $\mathrm{Aut}(\mathrm{Aut}(M))\cong {\mathrm{Aut}(\mathbb{G}_M)}$). 
\end{lemma}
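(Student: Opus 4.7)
My plan is to reduce the statement to a direct application of Theorem~\ref{genevansth} and Lemma~\ref{phiinjective}, by taking the group $H$ in the hypotheses of \ref{genevansth} to be $\mrm{Aut}(M)$ itself. Since $\phi$ from \ref{notation_phi} is assumed to be an isomorphism, it is in particular a homomorphism $\phi_H$ from $H = \mrm{Aut}(M)$ onto $\mrm{Aut}(\mathbb{G}_M)$, and $h(A) \in \mathbf{A}(M)$ for every $A \in \mathbf{A}(M)$ because automorphisms preserve $\mrm{acl}^{\mrm{g}}_M$; this yields condition~(1) of \ref{genevansth}. Condition~(2) is immediate, as conjugation by an element $h \in \mrm{Aut}(M)$ is an inner automorphism of $\mrm{Aut}(M)$, hence a topological automorphism. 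For each $g \in \mrm{Aut}(\mathbb{G}_M)$, set $\hat{g} := \phi^{-1}(g) \in \mrm{Aut}(M)$; the map $g \mapsto \hat{g}$ is a homomorphism because $\phi$ is a group isomorphism, giving~(3). Condition~(4) is a brief unraveling of the definition of $\pi = \xi \circ \psi$ from Lemma~\ref{canonicalmaplemma}: for $K \in \mathbb{G}_M$, the second component $\xi$ sends the conjugation $\hat{g}(\cdot)\hat{g}^{-1}$ to the map $K \mapsto \hat{g}(K) = \phi(\hat{g})(K) = g(K)$, so $\pi(\hat{g}(\cdot)\hat{g}^{-1}) = g$.

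With this set-up, $\phi$ being an isomorphism is stronger than injectivity, so Lemma~\ref{phiinjective} immediately gives $\mrm{Aut}_{\mrm{top}}(\mrm{Aut}(M)) \cong \mrm{Aut}(\mathbb{G}_M)$, with the isomorphism realized by $\pi$. To obtain the ``inner'' clause, I will then argue as follows: given any $\alpha \in \mrm{Aut}_{\mrm{top}}(\mrm{Aut}(M))$, let $g := \pi(\alpha) \in \mrm{Aut}(\mathbb{G}_M)$ and $\hat{g} := \phi^{-1}(g) \in \mrm{Aut}(M)$. By condition~(4) just verified, $\pi$ also sends the inner automorphism $\hat{g}(\cdot)\hat{g}^{-1}$ to $g$. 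Since $\pi$ is an isomorphism, hence in particular injective, we conclude $\alpha = \hat{g}(\cdot)\hat{g}^{-1}$; this shows every topological automorphism of $\mrm{Aut}(M)$ is inner.

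Finally, the parenthetical assertion under the small index property is immediate: if $M$ has $\mrm{SIP}$, then every abstract automorphism of $\mrm{Aut}(M)$ is automatically topological (being continuous on the countable-index open subgroups), so $\mrm{Aut}_{\mrm{top}}(\mrm{Aut}(M)) = \mrm{Aut}(\mrm{Aut}(M))$ and every automorphism of $\mrm{Aut}(M)$ is inner. The only mildly non-routine point is the verification of condition~(4) of Theorem~\ref{genevansth} in this setting, which amounts to chasing through the definitions of $\pi$, $\psi$, and $\xi$ from Lemma~\ref{canonicalmaplemma}; everything else is formal once $H = \mrm{Aut}(M)$ is taken as the witness group.
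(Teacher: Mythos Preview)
Your proposal is correct and follows essentially the same approach as the paper: both take $H=\mrm{Aut}(M)$ with $\hat{g}=\phi^{-1}(g)$, verify the hypotheses of Theorem~\ref{genevansth}, and invoke Lemma~\ref{phiinjective} for the isomorphism. For the inner-ness clause, the paper argues via the commutative diagram that $\xi$ is surjective (hence an isomorphism) and concludes that $\gamma=\psi^{-1}\xi^{-1}\phi$ is surjective, whereas you argue directly that $\pi$ is injective and compare $\alpha$ with the conjugation $\hat{g}(\cdot)\hat{g}^{-1}$; these are equivalent formulations of the same diagram chase.
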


\begin{proof} Taking $H=\mathrm{Aut}(M)$ and $\hat{g}=\varphi^{-1}(g)$, we are under the hypotheses of Theorem \ref{genevansth}. Indeed, the map $g\mapsto \hat{g}$ is a group homomorphism and $\hat{g}(\cdot)\hat{g}^{-1} \in \mathrm{Aut}_{\mathrm{top}}(\mathrm{Aut}(M))$. Recall the following commutative diagram:

$$\begin{tikzcd}
    \mathrm{Aut}(M) \arrow[r, "\varphi","\cong"'] \arrow[d, "\gamma"]
    & \mathrm{Aut}(\mathbb{G}_M) \\
    \mathrm{Aut}_{\mathrm{top}}(\mathrm{Aut}(M)) \arrow{ur}{\pi} \arrow[r,"\psi" ,"\cong"']
    & \mathrm{Aut}(\mathcal{E}^{{\mathrm{ex}}}_M(1)) \arrow[u, "\xi" ]
    \end{tikzcd}$$
By the commutativity of the diagram $\pi(\hat{g}(\cdot)\hat{g}^{-1})=g$. 
 
 Hence, by Lemma \ref{phiinjective}, $\xi$ is injective and  $\mathrm{Aut}_{\mathrm{top}}(\mathrm{Aut}(M))\cong {\mathrm{Aut}(\mathbb{G}_M)}$. The map $\varphi$ is surjective, so $\xi$ is also surjective. Thus $\xi$ is a group isomorphism. The map $\gamma=\psi^{-1}\xi^{-1}\varphi$ is surjective, that is, every automorphism is inner.
\end{proof}

 \begin{corollary1.4} Let $M\models \mathrm{ACF}_0$ be countable and saturated, then we have:
    $$\mathrm{Aut}(\mathrm{Aut}(M))\cong \mathrm{Aut}(\mathbb{G}_M),$$
and automorphisms of $\mathrm{Aut}(M)$ are \mbox{inner (so $\mathrm{Aut}(M)$, being centerless, is complete).}
\end{corollary1.4}

\begin{proof}
    First, concerning $\mathrm{SIP}$, see \cite[Theorem~2]{Evans_ACF}. Concerning the injectivity of the map $\varphi$, this follows from the remark soon after Lemma 2.5 of \cite{evan1995}, while the surjectivity follows from Theorem A of \cite{evan1995}.    
    \noindent By Lemma \ref{evcompl},  we have that $\mathrm{Aut}(\mathrm{Aut}(M))\cong {\mathrm{Aut}(\mathbb{G}_M)}$ and every automorphism of $\mathrm{Aut}(M)$ is inner.   
\end{proof}

\begin{remark}
    Let $\mathbb{K}$ be a field and let $V$ be a non-trivial $\mathbb{K}$-vector space. Then the map $\varphi$ from Fact \ref{notation_phi} is not injective. Indeed, let $\alpha \in \mathrm{Aut}(V)$ be such that $v \mapsto kv$ with $k \in {\mathbb{K}^\times}$ and $v \in V$.
    Then, $\varphi_\alpha$ is the identity in $\mathbb{G}_V$.
\end{remark}

\begin{definition} Let $V$ {be} a $\mathbb{K}$-vector space. A semilinear automorphism of $V$ is a bijection $f$ from $V$ into itself for which there exists $\tau \in \mathrm{Aut}(\mathbb{K})$ such that, for every $c_1,c_2 \in \mathbb{K}$ and $v_1,v_2 \in V$, 
$f(c_1v_1+c_2v_2)=\tau(c_1)f(v_1)+\tau(c_2)f(v_2)$.
\end{definition}

\begin{corollary1.5} Let $V$ be a vector space of dimension $\aleph_0$ over a countable field $\mathbb{K}$. Then:
 $$\mathrm{Aut}(\mathrm{Aut}(V)) \cong \mathrm{Aut}(\mathbb{K}^\times) \rtimes \mathrm{Aut}(\mathbb{G}_V).$$
\end{corollary1.5}

\begin{proof}
    First of all, notice that $V$ has the small index property (see \cite[Fact 4.2.11]{hodges}).
    Let $H = \mathrm{Aut}_{\mathrm{SL}}(V)$ be the group of all semilinear automorphisms of $V$. By the Fundamental Theorem of Projective Geometry (see \cite[6.8.3]{ftpgbook}) there exists a homomorphism from $\mathrm{Aut}(\mathbb{G}_V)$ into $H$, $g \mapsto \hat{g} \in H$, satisfying all the hypotheses of Theorem~\ref{genevansth}. Therefore, we can apply Theorem \ref{genevansth} and so we have: 
    $$\mathrm{Aut}(\mathrm{Aut}(V))\cong \mathrm{ker}(\pi) \rtimes \mathrm{Aut}(\mathbb{G}_V).$$

 \noindent
 Now we examine the kernel of $\pi$. Since $\psi$ is an isomorphism, $\mathrm{ker}(\pi)\cong\mathrm{ker}(\xi)$, where $\xi$ is defined as in the proof of \ref{canonicalmaplemma}. We shall show that $\ker(\xi)\cong \mathrm{Aut}(\mathbb{K}^\times)$.

    \noindent First of all, recall that the domain of $\mathcal{E}^{\mathrm{ex}}_V(1)$ is made of triples $(K, p, K')$ where $p: K \cong K'$ and $K$ is a subspace of dimension $0$ or $1$. 
Enumerate the subspaces of dimension $1$ of $V$ as $(K_i : i < \omega)$ and, for every $i < \omega$, choose an element $e_i \in K_i \setminus \{0_V\}$. Then any subspace of dimension $1$ has the form $\{ae_i : a \in \mathbb{K} \} =: \mathbb{K}e_i$ for some $i < \omega$, and so any triple $(K, p, K')$ with $p: K \cong K'$ has the form $a e_i \mapsto \lambda_p a e_j$ for some $i, j < \omega$, where $\lambda_p \in \mathbb{K}^\times$. Thus, for the rest of the proof, we write arbitrary elements of $\mathcal{E}^{\mathrm{ex}}_V(1)$ as $(\mathbb{K}e_i, \lambda, \mathbb{K}e_j)$.

\noindent Let $f \in \mathrm{ker}(\xi)$, then for every $i < \omega$, $f$ sends each triple of the form $(\mathbb{K}e_i, \lambda, \mathbb{K}e_i)$ into a triple of the form $(\mathbb{K}e_i, \lambda', \mathbb{K}e_i)$; so, letting $\lambda' = f_i(\lambda)$, for each $i < \omega$, we have a permutation $f_i$ of $\mathbb{K}^\times$. We will show that:
 \begin{enumerate}[(i)]
 \item for every $i < \omega$, $f_i \in \mathrm{Aut}(\mathbb{K}^\times)$;
 \item for every $i, j < \omega$, $f_i = f_j$.
 \end{enumerate}

\noindent Let $\alpha$ be a map from $\mathrm{ker}(\xi)$ into $\mathrm{Aut}(\mathbb{K}^\times)$ defined by $\alpha(f)=f_i$ for a certain $i \in \omega$. Using (i) and (ii), it is easy to see that the map is well defined and an isomorphism.

\noindent Concerning (i), let $i < \omega$ and $\lambda, \mu \in \mathbb{K}^\times$, then: 
$$\mathbb{K}e_i \xrightarrow{\lambda} \mathbb{K}e_i \xrightarrow{\mu} \mathbb{K}e_i = \mathbb{K}e_i \xrightarrow{\mu\lambda} \mathbb{K}e_i$$
is mapped to:
$$\mathbb{K}e_i \xrightarrow{f_i(\lambda)} \mathbb{K}e_i \xrightarrow{f_i(\mu)} \mathbb{K}e_i = \mathbb{K}e_i \xrightarrow{f_i(\mu\lambda)} \mathbb{K}e_i$$
and so, as $f \in \mathrm{Aut}(\mathcal{E}^{\mathrm{ex}}_V(1))$, we have that:
$$f_i(\mu\lambda) = f_i(\lambda\mu) = f_i(\lambda)f_i(\mu).$$
Similarly, for $i < \omega$ and $\lambda \in \mathbb{K}^\times$ we have
$$\mathbb{K}e_i \xrightarrow{\lambda} \mathbb{K}e_i \xrightarrow{\lambda^{-1}} \mathbb{K}e_i = \mathbb{K} e_i \xrightarrow{1} \mathbb{K} e_i$$
is mapped to:
$$\mathbb{K}e_i \xrightarrow{f_i(\lambda)} \mathbb{K}e_i \xrightarrow{f_i(\lambda^{-1})} \mathbb{K}e_i = \mathbb{K} e_i \xrightarrow{1} \mathbb{K} e_i$$
and so, as $f \in \mathrm{Aut}(\mathcal{E}^{\mathrm{ex}}_V(1))$, we have that:
$$f_i(\lambda^{-1} \lambda) = 1 = f_i(\lambda \lambda^{-1}).$$
Concerning (ii), notice that for $\lambda \in \mathbb{K}^\times$ we obviously have that the following holds:
$$\mathbb{K}e_i \xrightarrow{\lambda} \mathbb{K}e_i \xrightarrow{1} \mathbb{K}e_j \xrightarrow{\lambda^{-1}} \mathbb{K}e_j \xrightarrow{1} \mathbb{K} e_i = \mathbb{K} e_i \xrightarrow{1} \mathbb{K} e_i,$$
and so:
$$\mathbb{K}e_i \xrightarrow{f_i(\lambda)} \mathbb{K}e_i \xrightarrow{1} \mathbb{K}e_j \xrightarrow{f_j(\lambda)^{-1}} \mathbb{K}e_j \xrightarrow{1} \mathbb{K} e_i = \mathbb{K} e_i \xrightarrow{1} \mathbb{K} e_i,$$
from which it follows that $f_i(\lambda) f_j(\lambda)^{-1} = 1$ and so $f_i(\lambda) = f_j(\lambda)$, as desired. This concludes the proof of (i), (ii), and thus the proof.
\end{proof}

\appendix
\section{The topological and algebraic structure of $\mathrm{Aut}(M)$}
In this short appendix, we examine the algebraic and topological structure of $\mathrm{Aut}(M)$ in more detail. The first two propositions provide a characterization of the subgroup and normal subgroup relations between generalized pointwise stabilizers (in the sense of \ref{GS_def}). These results are not needed for our main arguments; we have nonetheless included them here, as we believe they are of independent interest.
\begin{proposition}\label{pre_normality_prop} Let $M$ be a saturated structure. Let $H_1, H_2 \in \mathcal{GS}(M)$. The following conditions are equivalent:
 \begin{enumerate}[(1)]
 \item $H_1 \leq H_2$;
 \item there are $K_1, K_2 \in \mathbf{A}(M)$ and $L_i \leq \mathrm{Aut}_M(K_i)$ such that:
 \begin{enumerate}[(a)]
 \item $H_i = G_{(K_i, L_i)}$;
 \item $K_2 \subseteq K_1$;
 \item for every $f \in L_1$, $f \restriction K_2 \in L_2$. \end{enumerate}
\end{enumerate}
\end{proposition}

\begin{proof}
Concerning (2) implies (1), suppose not, and let $h \in H_1 \setminus H_2$. Since $K_2 \subseteq K_1$, $h \restriction K_1 \in L_1$ but $(h\restriction K_1) \restriction K_2=h\restriction K_2 \notin L_2$, a contradiction.

\noindent Now we show that (1) implies (2).
 $K_2 \subseteq K_1$ follows from Proposition \ref{normality}.
Let $f \in L_1$ and  $\hat{f}\in \mathrm{Aut}(M)$ his expansion. Suppose $f \restriction K_2 \notin L_2$, then $\hat{f} \notin H_2$. Therefore $\hat{f} \in H_1 \setminus H_2$, a contradiction.
\end{proof}

\begin{proposition}\label{normality_prop} Let $M$ be a saturated structure. Let $H_1, H_2 \in \mathcal{GS}(M)$. The following conditions are equivalent:
 \begin{enumerate}[(1)]
 \item $H_1 \trianglelefteq H_2$;
 \item there is $K \in \mathbf{A}(M)$ and $L_1 \trianglelefteq L_2 \leq \mathrm{Aut}_M(K)$ such that $H_i = G_{(K, L_i)}$ for $i =1,2$.
\end{enumerate}
\end{proposition}

 \begin{proof} Follows from Proposition \ref{normality} and the proof of \cite[2.10]{Paolini_BLMS} with minor modifications.
\end{proof}

The next proposition comes from \cite[2.13]{Paolini_BLMS} and holds also in this framework.
\begin{proposition}\label{char_Lpoint_stab} Let $M$ be a saturated structure. Let $L$ be a group and $H \in \mathcal{PS}(M)$. The following conditions are equivalent:
 \begin{enumerate}[(1)]
 \item $H = G_{(K)}$ and $\mathrm{Aut}_M(K) \cong L$;
 \item there is $H' \in \mathcal{GS}(M)$ such that $H \trianglelefteq H'$, $H'$ is maximal under this condition and $H'/H \cong L$.
\end{enumerate}
\end{proposition}

The following proposition gives us a characterization of the $G_\delta$ subgroups in this framework.

\begin{proposition}
Let $M$ be a saturated structure with the Lascar Property. Every $G_\delta$ subgroup of $G$ is of the form $G_{(K,L)}$  with $K$ an algebraically closed subset of $M$ and $L$ a subgroup of ${\mathrm{Aut}}_M(K)$. 
\end{proposition}

\begin{remark}
    We have not defined $G_{(K,L)}$ and $\mathrm{Aut}_M(K)$ for algebraically closed sets, but they are the natural generalization of Definitions \ref{algclosednotation} and \ref{GS_def}.
\end{remark}

\begin{proof}
Let $G_{(K,L)}$ for a certain $K$ and $L$ as above. Since the class of $G_\delta$ subgroups is upward closed, it suffices to observe that $G_{(K)}=G_{(K,\{ \mathrm{id}_{{K}}\})} \leq G_{(K,L)}$ and $G_{(K)}=\bigcap_{a \in K} G_{(a)}$ is $G_\delta$. 

\noindent
Conversely, let $H$ a $G_\delta$ subgroup. Then $H=\bigcap_{n \in \omega} A_n$ with $A_n$ open subgroups. By Lemma \ref{baguettelemma}, $A_n=G_{(K_n,L_n)}$ with $(K_n,L_n) \in \mathbf{EA}(M)$ for every $n \in \omega$. Let the algebraically closed $K=\bigcap_n K_n$ and $L=\{f \restriction K : f \in G  \text{ such that } f\restriction K_n \in L_n\}$. We show that $H=G_{(K,L)}$. Let $f \in H$. Since $f \in G_{(K_n,L_n)}$, $f\restriction K_n \in L_n$ and $f(K)=f(\bigcap_n K_n)=\bigcap_n f(K_n)=\bigcap_n K_n=K$, that is $f \in G_{(K,L)}$. For the other containment, suppose not and let $f \in G_{(K,L)} \setminus H$. Then $f \notin G_{(K_n,L_n)}$ for some $n$. So $f\restriction K_n \notin L_n$, that is, $f \notin L$, a contradiction.
\end{proof}

Under our assumptions, in the $\omega$-categorical case, $\mathcal{GS}(M)$ coincides with the collection of open subgroups of $G = \mathrm{Aut}(M)$ (see \cite{Paolini_BLMS}), while in the strongly minimal case, since every algebraically closed subset of a finitely generated algebraically closed set is itself finitely generated, $\mathcal{GS}(M)$ coincides with the class of $G_\delta$ subgroups. In general, however, $\mathcal{GS}(M)$ lies between these two classes.

\end{document}